\renewcommand*{\eqref}[1]{
	\hyperref[{#1}]{\textup{\tagform@{\ref*{#1}}}}}
\newcommand{\ind}{{\rm ind}}
\newcommand{\beq}{\begin{equation}}
\newcommand{\beqn}{\begin{equation*}}
\newcommand{\birk}{L}
\newcommand{\dil}{D}
\newcommand{\eeq}{\end{equation}}
\newcommand{\eeqn}{\end{equation*}}
\newcommand{\la}{\Lambda}
\newcommand{\ola}{\overline{\Lambda}}
\newcommand{\len}{L}
\newcommand{\prm}{p_m}
\newcommand{\R}{\mathbb{R}}
\newcommand{\leins}{L_1}
\newcommand{\lm}{L_3}
\newcommand{\n}{\mathbb{N}}
\newcommand{\q}{\mathbb{Q}}
\newcommand{\rk}{{\rm rk}}
\newcommand{\sn}{S^n}
\newcommand{\sm}{S^{2m-1}}
\newcommand{\sone}{S^1}
\newcommand{\z}{\mathbb{Z}}
\theoremstyle{plain}
\newtheorem{theorem}{Theorem}[section]
\newtheorem{corollary}[theorem]{Corollary}
\newtheorem{lemma}[theorem]{Lemma}
\newtheorem{assumption}[theorem]{Assumption}
\theoremstyle{definition}
\theoremstyle{remark}
\newtheorem{remark}[theorem]{Remark}
\newtheorem{number-env}[theorem]{}
\title{ Two short closed geodesics 
		on a sphere of odd dimension}
\author{Hans-Bert Rademacher}
\address{Mathematisches Institut, 
Universit{\"a}t Leipzig, D--04081 Leipzig, Germany}
\email{rademacher@math.uni-leipzig.de}
\urladdr{www.math.uni-leipzig.de/\symbol{126}rademacher}
\date{revised version, 2023-01-18}
\subjclass[2020]{53C22, 58E10}
\keywords{closed geodesic, Morse index,
	non-reversible Finsler metric, equivariant Morse theory,
	free loop space}
\begin{document}
%%%%%%%%%%%%%%%%%%%%%%%%%%%%%%%%%%%%%%%%%%%%%%%%%%%%
\begin{abstract}
	We show that for an open and dense
	set of \emph{non-reversible} Finsler metrics
	on a sphere $S^n$ of odd dimension
	$n=2m-1\ge 3$ there is a second
	closed geodesic
	with Morse index
	$\le 4(m+2)(m-1)+2.$	
\end{abstract}
%%%%%%%%%%%%%%%%%%%%%%%%%%%%%%%%%%%%%%%%%%%%%%%%%%%%
\maketitle
%%%%%%%%%%%%%%%%%%%%
\baselineskip 18pt
%%%%%%%%%%%%%%%%%%%%%%%%%%%%%%%%%%%%%%%%%%%%%%%%%%%%%
\section{Introduction}
%%%%%%%%%%%%%%%%%%%%%%%%%%%%%%%%%%%%%%%%%%%%%%%%%%%%
In this paper we consider the
sphere $S^n$ 
of dimension $n \ge 2$
carrying a
non-reversible Finsler metric $f.$
Hence the length of a curve in general depends
on the orientation. The 
\emph{reversibility}
$\lambda=\max\{f(-X)\,;\, f(X)=1\}$
was introduced in \cite{Ra04}.
Then $\lambda \ge 1$
and $\lambda=1$ if and only if the Finsler
metric is \emph{reversible,}
i.e. $f(-X)=f(X)$ for all tangent vectors $X.$
For a tangent vector $X \in TS^n$
 we denote by $f_0(X)=\sqrt{g_0(X,X)}$ the length of a vector 
with respect to the standard Riemannian metric 
$g_0$ of constant sectional curvature $1$
on $S^n.$
%%%%%%%%%%%%%%%%%%%%%%%%%%%%%%%%%%%%%%%%%%%%%%%%%%%%%%%
Let $\dil=\dil(f)$ be the smallest positive number such that
\beq
\label{eq:dil}
\dil^{-1} f_0(X)\le f(X)\le \dil f_0 (X)
\eeq
holds for all tangent vectors $X.$
We call this invariant the \emph{distortion} of the Finsler metric $f.$
Obviously $\dil^2 \ge \lambda.$
%%%%%%%%%%%%%%%%%%%%%%%%%%%%%%%%%%%%%%%%%%%%%%%%%%%%%%%
Let $\birk=\birk(f)$ be the critical value of 
a generator of the non-trivial  homology
class $H_{n-1}\left(\Lambda S^n/S^1;\mathbb{Q}\right)\cong \mathbb{Q}$
in dimension $(n-1)$ in the free loop space $\Lambda S^n.$
Lyusternik and Fet~\cite{LF} used an idea by Birkhoff 
to show the existence of a closed geodesic 
$c_1$ whose length $l(c_1)$ equals $\birk$ 
and whose Morse index satisfies $\ind (c_1)\le n-1.$
Inequality~\eqref{eq:dil} implies that 
$2\pi /\dil\le L= l (c_1)\le 2\pi \dil.$
It follows from a result by 
Fet~\cite{Fe} that there exists a
second closed geodesic for a 
\emph{reversible} Finsler metric
which is bumpy, i.e. all its closed geodesics are
non-degenerate.  

In this paper we consider the existence of a second
closed geodesic for a \emph{non-reversible} Finsler metric.
On a $2$-sphere with a bumpy metric there always exists
a second closed geodesic $c_2$ 
geometrically distinct from $c_1$ as shown in \cite[(4.1)]{Ra89}.
Bangert and Long were able to show in \cite{BL} that this
statement holds for \emph{any} 
non-reversible Finsler metric.
There is a family $f_{\mu}, \mu \in [0,1), \mu \not\in \q$ of  Katok metrics on $S^2$ 
which are bumpy, have constant flag curvature
$1$ and carry exactly two
geometrically distinct closed geodesics
$c_1,c_2$ with 
$\ind(c_1)=1,\lim_{\mu \to 1}\ind(c_2)=\infty,
L(c_1)< 2\pi,$ and $\lim_{\mu \to 1}L(c_2)=\infty.$
Hence there exists 
in general only one \emph{short} closed geodesic
on $S^2.$

In higher dimensions there are many 
results on the existence of a second closed
geodesic, cf. for example 
\cite{DL},\cite{Ra10}, \cite{Ra17},
\cite[Cor.1.2]{DLW}, and
~\cite[Cor.1.14]{AGKM}.
Compare also the recent survey~\cite{LW2022}.
For existence results for closed geodesics
in Riemannian and Finsler geometry we also refer to the surveys
\cite{T2010} and \cite{Oancea}.
Under curvature assumptions one can give bounds for
the index of the second closed geodesic,
cf. for example \cite{Ra2007}.
But we are not aware of
estimates for the
index of the second closed geodesic
holding on an open and dense subset of metrics
on an $n$-dimensional sphere with $n \ge 3.$

We state our main result which shows 
in particular that for
an odd-dimensional sphere of dimension
$n=2m-1\ge 3$ endowed with a
bumpy metric there are two
geometrically distinct short closed geodesics,
with index $\le 4(m+2)(m-1)+2.$  More precisely we show:
%%%%%%%%%%%%%%%%%%%%%%%%%%%%%%%%%%%%%%%%%%%%%%%%%%%%%%
%%%%%%%%%%%%%%%%%%%%%%%%%%%%%%%%%%%%%%%%%%%%%%%%%%%%%%%
\begin{theorem}
\label{thm:length}
	Let $f$ be a non-reversible Finsler metric on the
	odd-dimensional sphere $S^{n}$ of dimension
	$n=2m-1\ge 3$
	with 
	distortion $\dil =\dil (f)\,.$ 
	Let $\prm$ be the smallest prime number which 
	is neither a divisor of $(m-1)$ nor of $m,$
	cf.	Lemma~\ref{lem:pm}, in particular 
	 $3 \le \prm\le m+2$
	for all $m \ge 2.$
	Assume that  all closed geodesics with length
	$\le \lm:=2\pi \prm D^3$ are
	non-degenerate.
	Then there are two geometrically
	distinct closed geodesics 
	with index $\le 4 \prm (m-1)+2$
	and 
	of  length 
	$\le \lm\,.$
\end{theorem}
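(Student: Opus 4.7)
The plan is to argue by contradiction: suppose every closed geodesic of length $\le \lm$ on $S^n$ is an iterate of the Lyusternik--Fet geodesic $c_1$ (which has $l(c_1) = \birk \le 2\pi \dil$ and $\ind(c_1) \le n-1 = 2(m-1)$ by inequality~\eqref{eq:dil}). Under the non-degeneracy hypothesis every iterate $c_1^k$ with $k \birk \le \lm$ is non-degenerate; in particular $c_1^{\prm}$ is non-degenerate since $\prm \birk \le 2\pi \prm \dil \le \lm$.

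The first main step is Bott's iteration formula for $c_1$. The sequence $\ind(c_1^k)$ is controlled by the Bott functions attached to the eigenvalues of the linearized Poincar\'e map, and the mean index $\alpha(c_1) = \lim_k \ind(c_1^k)/k$ is rational with controlled denominator once non-degeneracy holds. This forces $\ind(c_1^{\prm})$ to lie in an interval compatible with the target bound $4\prm(m-1)+2 = 2\prm(n-1)+2$, with slack for an additive $O(n-1)$ correction. Both residues modulo $\prm$ of $\ind(c_1^k)$ and their growth rates are pinned down by the rotation numbers of the Poincar\'e map, themselves integer combinations tied to the dimensions $m-1$ and $m$.

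The second step is to identify a suitable non-trivial rational $S^1$-equivariant homology class in $\Lambda S^n$ for odd $n = 2m-1$ and bound its minimax critical value. A Birkhoff--Lyusternik--Fet-type construction, applied to a higher-dimensional class whose representative cycle is modeled on an iterate-deformation of $c_1$, should yield a minimax value controlled by $2\pi \prm \dil^3 = \lm$; the cubic factor of $\dil$ arises from applying the distortion inequality~\eqref{eq:dil} in three successive length-conversion steps between the standard Riemannian length $f_0$ and the Finsler length $f$. Under the contrary assumption, the equivariant Morse inequalities would force some iterate $c_1^k$ (with $k\birk\le\lm$) to carry this class, i.e. $\ind(c_1^k)$ would have to equal the class's dimension up to a small shift coming from the $S^1$-action. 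Here the arithmetic hypothesis that $\prm$ divides neither $m-1$ nor $m$ enters decisively: the residue of $\ind(c_1^k)$ modulo $\prm$, being a combination of the controlled rotation numbers, cannot attain the required target residue, producing the contradiction and hence the second geodesic $c_2$ with $l(c_2)\le \lm$ and $\ind(c_2)\le 4\prm(m-1)+2$.

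The principal obstacle I anticipate is justifying the length bound $\lm = 2\pi \prm \dil^3$: the index bound reads off cleanly from Bott's formula once the minimax class is chosen, but tracking the cubic dependence on $\dil$ through the piecewise-geodesic cycle construction requires careful geometric bookkeeping, with each of the three appearances of $\dil$ corresponding to a distinct conversion between $f_0$-length and $f$-length in the cycle representing the selected equivariant homology class.
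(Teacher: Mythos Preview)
Your broad architecture---assume a unique prime closed geodesic $c$, control the index sequence, pick out a homology class in degree $4\prm(m-1)$, and extract an arithmetic contradiction involving $\prm$, $m-1$, $m$---matches the paper. But the contradiction mechanism you sketch is not the one that works, and I do not see how your version could be made to work.

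You propose that ``the residue of $\ind(c_1^k)$ modulo $\prm$ \ldots\ cannot attain the required target residue.'' This is not the obstruction. Under the single-geodesic assumption, the Morse inequalities on $\ola$ together with the known Betti numbers~\eqref{eq:homlasone} force the entire sequence $\ind(c^r)$ for $rL\le 2\pi\prm D$ to be completely determined (Lemma~\ref{lem:morse}, Lemma~\ref{lem:two}), and in particular there are \emph{two} iterates with exactly the target index:
\[
\ind\!\left(c^{(2\prm-1)m}\right)=\ind\!\left(c^{(2\prm-1)m+1}\right)=4\prm(m-1).
\]
So no residue-of-index argument can produce a contradiction; the indices hit every value they are supposed to.

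The actual contradiction is about the \emph{iteration numbers}, not the indices, and comes from comparing two descriptions of the projection $\rho_*:H_{4\prm(m-1)}(\Lambda,\Lambda^0;\z)\to FH_{4\prm(m-1)}(\ola,\ola^0;\z)$. Globally, Lemma~\ref{lem:hom-projection} gives $\rho_*(a_{\prm})=\prm\,\tilde a_{\prm}$. Locally, on the critical group of $c^r$ one has $\rho_*(s_r)=r\cdot S_r$ (Equation~\eqref{eq:psr}), because the $S^1$-action on the local negative disc has isotropy $\z_r$. Writing the global generator in the local basis $\{s_{(2\prm-1)m},\,s_{(2\prm-1)m+1}\}$ with coprime coefficients $\beta,\alpha$ and chasing a commutative diagram yields
\[
\prm\mid \alpha\bigl((2\prm-1)m+1\bigr)\quad\text{and}\quad \prm\mid \beta\,(2\prm-1)m,
\]
equivalently $\prm\mid(m-1)\alpha$ and $\prm\mid m\beta$. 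Now the hypothesis $\prm\nmid m-1$ and $\prm\nmid m$ forces $\prm\mid\alpha$ and $\prm\mid\beta$, contradicting coprimality. Your proposal never mentions Lemma~\ref{lem:hom-projection} or Equation~\eqref{eq:psr}; without this comparison of the two $\rho_*$ computations there is no arithmetic obstruction, and your residue argument does not substitute for it.

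A secondary point: the cubic $D^3$ in $\lm$ is not ``three conversions in a cycle construction.'' It comes from the chain of inclusions $\Lambda_0^{2\pi\prm}\subset\Lambda^{2\pi\prm D}\subset\Lambda_0^{2\pi\prm D^2}\subset\Lambda^{2\pi\prm D^3}$ (Equation~\eqref{eq:D}), used to show that the map $H_j(\ola^{2\pi\prm D},\ola^0;\q)\to H_j(\ola,\ola^0;\q)$ is an isomorphism in the relevant range (Lemma~\ref{lem:two}); one needs non-degeneracy up to $2\pi\prm D^3$ so that the Morse theory on $\ola^{2\pi\prm D^3}$ is available.
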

%%%%%%%%%%%%%%%%%%%%%%%%%%%%%%%%%%%%%%%%%%%%%%%%%%%%%%
%%%%%%%%%%%%%%%%%%%%%%%%%%%%%%%%%%%%%%%%%%%%%%%%%%%%%%
For $n=3,m=2, p_2=3$ we obtain for the second closed
geodesic $c_2$ on $S^3:$
$\ind c_2\le 14.$
For $n=6k+3=2m-1$ resp.
$m \equiv 2\pmod{3}$ we have $\prm=3,$
hence for the second closed geodesic
$c_2$ on $S^{2m-1}:
\ind(c_2)\le 12m-10.$
The proof of Theorem~\ref{thm:length} is given in
Section~\ref{sec:proofs}.
We use the computation of the
homomorphism in homology induced by the
projection of the free
loop space $\Lambda S^n$ onto the quotient
space $\Lambda S^n/S^1$ as
given in Lemma~\ref{lem:hom-projection} 
for $n=2m-1.$ An analogous result is not
available for even dimension $n.$
% % % % % % % % % % % % % % % % % % % %
Recall that $f_0$ is the Finsler metric defined
by the standard Riemannian metric of constant
sectional curvature $1.$ 
There is a one-parameter
family $f_{\mu},
\mu \in [0,1)$ of Finsler metrics on $S^n$
starting at the standard metric $f_0$ with the
following properties: For 
every irrational $\mu$ the metric
is non-reversible and bumpy and carries exactly
$2m$ geometrically distinct closed geodesics.
For $n=2m-1$ of these closed geodesics
the index is at most $6(m-1)$
but the index of one of these closed geodesics can
be arbitrarily large. 
This example is explained 
in detail in Section~\ref{sec:katok},
these metrics were first studied by Katok,
cf.~\cite{Zi}.

The set of metrics satisfying the
assumptions of Theorem~\ref{thm:length} contains 
an open and
dense subset. This follows from
the following
%%%%%%%%%%%%%%%%%%%%%%%%%%%%%%%%%%%%%%%%%%%%%%%%%%%%%%%%%%%%%%%%%%%%%%
\begin{theorem}
	\label{thm:open-and-dense}
	Let $M$ be a compact manifold
	endowed with a Finsler metric $f_0.$
	For an arbitrary non-reversible Finsler metric
	$f$ the distortion $D=D(f)$ is the smallest
	positive number satisfying Equation~\eqref{eq:dil}
	for all tangent vectors. 
	For a positive number 
	$L$ let $\mathcal{F}_1(L)$ be the set
	of Finsler metrics $f$ on $M$ 
	all of whose closed geodesics of length
	$\le D^3(f) L$ are non-degenerate. 
	Then
	$\mathcal{F}_1(L)$ is an open and dense
	subset of the
	space $\mathcal{F}(M)$
	of all Finsler metrics on $M$
	with respect to the (strong) $C^r$-topology
	for $r\ge 4.$	
\end{theorem}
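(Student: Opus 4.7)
The plan is to handle openness and density separately; as a preliminary remark, the distortion $D\colon\mathcal{F}(M)\to\mathbb{R}_{>0}$ is continuous in the $C^0$-topology (it is the supremum over the unit sphere bundle of $\max\{f/f_0,\,f_0/f\}$), so $f\mapsto D^3(f)L$ is continuous as well.

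For \emph{openness}, fix $f\in\mathcal{F}_1(L)$ and set $L':=D^3(f)L$. By hypothesis every closed $f$-geodesic of length $\le L'$ is non-degenerate, hence isolated as a critical $S^1$-orbit of the energy functional on $\Lambda M$; combined with the Arzel\`a--Ascoli compactness of closed geodesics of bounded length, this forces the number of such $S^1$-orbits to be finite, say $c_1,\dots,c_N$. The key claim is that there exists $\epsilon_0>0$ with no closed $f$-geodesic of length in $(L',L'+\epsilon_0]$: otherwise a sequence of closed geodesics whose lengths decrease to $L'$ would, up to a subsequence, converge in $\Lambda M$ to a closed $f$-geodesic of length exactly $L'$, contradicting its isolation. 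For $f'$ sufficiently $C^r$-close to $f$ with $r\ge 2$, the $S^1$-equivariant implicit function theorem applied to the energy functional produces a non-degenerate closed $f'$-geodesic $c'_i$ near each $c_i$, with $f'$-length close to the $f$-length of $c_i$; a standard subsequence/compactness argument then shows that no other closed $f'$-geodesic of $f'$-length $\le L'+\epsilon_0/2$ can exist. Shrinking the neighborhood further so that $D^3(f')L<L'+\epsilon_0/2$, every closed $f'$-geodesic of length $\le D^3(f')L$ is one of the non-degenerate $c'_i$, whence $f'\in\mathcal{F}_1(L)$.

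For \emph{density}, I would invoke the bumpy-metric theorem for non-reversible Finsler metrics: the subset of bumpy metrics (those for which every closed geodesic, including all iterates, is non-degenerate) is $C^r$-residual, hence dense, in $\mathcal{F}(M)$ for $r\ge 4$. Since every bumpy metric trivially lies in $\mathcal{F}_1(L)$ for every $L>0$, density of $\mathcal{F}_1(L)$ follows immediately.

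The main obstacle I anticipate is the isolation step inside the openness argument: one has to exclude an accumulation of possibly highly degenerate closed geodesics from above at $L'$, which relies crucially on \emph{non-degenerate} critical $S^1$-orbits being isolated in $\Lambda M$ itself (not merely in the length spectrum), and on the $C^\infty$-compactness of the set of closed geodesics of bounded length. The density step is conceptually lighter but technically rests on the already established Finsler version of the bumpy-metric theorem.
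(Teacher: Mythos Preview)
Your proposal is correct and follows essentially the same route as the paper. For openness the paper also uses finiteness and persistence of non-degenerate periodic orbits together with the continuity of $f\mapsto D(f)$; your argument is a more explicit version that spells out the gap $\epsilon_0$ above $L'=D^3(f)L$ and the contradiction-by-compactness step, which the paper leaves implicit. For density the paper invokes the open-and-dense set $\mathcal{F}(T)$ for the fixed value $T=2D^3(f)L$ from \cite{RT2020} and then uses continuity of $D$ once more, whereas you appeal directly to the residual (hence dense) set of fully bumpy metrics; both rest on the same Finsler bumpy-metrics theorem, and your variant is slightly cleaner since it does not need the continuity of $D$ a second time.
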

%%%%%%%%%%%%%%%%%%%%%%%%%%%%%%%%%%%%%%%%%%%%%%%%%%%%%%%%
We give the proof in
Section~\ref{sec:generic}.
The essential ingredient is the
\emph{bumpy metrics theorem}
for Finsler metrics, cf. \cite[Thm.4]{RT2020}.

Using Theorem~\ref{thm:open-and-dense} we obtain
from Theorem~\ref{thm:length} the following
%%%%%%%%%%%%%%%%%%%%%%%%%%%%%%%%%%%%%%%%%%%%%%%%%%%%%%%%
\begin{corollary}
	\label{cor:open-and-dense}	
Let $\prm$ be the smallest prime number which
is neither a divisor of $(m-1)$ nor of $m.$
Then there is an open and dense subset of non-reversible Finsler
metrics on the sphere $S^n$ of odd dimension $n=2m-1\ge 3$
carrying two geometrically distinct closed geodesics
with index $\le 4\prm (m-1)+2.$	
\end{corollary}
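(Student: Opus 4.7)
The plan is to deduce the corollary by a direct composition of Theorems~\ref{thm:length} and \ref{thm:open-and-dense}. With $m \ge 2$ fixed, the prime $\prm$ from Lemma~\ref{lem:pm} is fixed, hence $L := 2\pi \prm$ is a fixed positive real number. Applying Theorem~\ref{thm:open-and-dense} with $M = S^n$ and this choice of $L$ produces an open and dense subset $\mathcal{F}_1(2\pi \prm) \subset \mathcal{F}(S^n)$ in the strong $C^r$-topology ($r \ge 4$), consisting of those Finsler metrics $f$ all of whose closed geodesics of length at most $D^3(f)\cdot 2\pi \prm$ are non-degenerate.

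The key observation is that the threshold $D^3(f) \cdot 2\pi \prm$ in the definition of $\mathcal{F}_1$ is exactly the quantity $\lm = 2\pi \prm D^3$ appearing in the hypothesis of Theorem~\ref{thm:length}. Therefore, for every $f \in \mathcal{F}_1(2\pi \prm)$ the non-degeneracy assumption of Theorem~\ref{thm:length} is satisfied, and the theorem supplies two geometrically distinct closed geodesics on $(S^n,f)$ of index at most $4\prm(m-1)+2$. Consequently $\mathcal{F}_1(2\pi \prm)$ itself is the open and dense subset claimed by the corollary; intersecting with the open set of non-reversible metrics (should one insist on strict non-reversibility) preserves openness and density within that category.

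The deduction is entirely formal and presents no real obstacle, since the substance of the argument lies in the proofs of the two preceding theorems. The only bookkeeping required is matching the parameter $L$ chosen when invoking Theorem~\ref{thm:open-and-dense} with the length threshold $\lm$ fixed in Theorem~\ref{thm:length}, which is immediate from $\lm = 2\pi \prm D^3$.
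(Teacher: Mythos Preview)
Your proposal is correct and matches the paper's own approach: the paper does not give a separate proof of the corollary but simply states that it follows from Theorem~\ref{thm:length} together with Theorem~\ref{thm:open-and-dense}, which is precisely the composition you carry out by taking $L = 2\pi\prm$ in Theorem~\ref{thm:open-and-dense} so that the non-degeneracy threshold $D^3(f)L$ becomes $\lm$.
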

%%%%%%%%%%%%%%%%%%%%%%%%%%%%%%%%%%%%%%%%%%%%%%%%%%%%%%%%
%%%%%%%%%%%%%%%%%%%%%%%%%%%%%%%%%%%%%%%%%%%%
\section*{Acknowledgement}
%%%%%%%%%%%%%%%%%%%%%%%%%%%%%%%%%%%%%%%%%%%%
I am grateful to Nancy Hingston for helpful discussions
about the topic of the paper. And the suggestions and
comments of the anonymous referee helped a lot to
improve the paper.
%%%%%%%%%%%%%%%%%%%%%%%%%%%%%%%%%%%%%%%%%%%%
%%%%%%%%%%%%%%%%%%%%%%%%%%%%%%%%%%%%%%%%%%%%%%%%%%%%%%%%
%%%%%%%%%%%%%%%%%%%%%%%%%%%%%%%%%%%%%%%%%%%%%%%%%%%%%%%%%%%%%
%%%%%%%%%%%%%%%%%%%%%%%%%%%%%%%%%%%%%%%%%%%%%%%%%%%%%%%%%%%%%%
\section{Homology of the free loop space}
%%%%%%%%%%%%%%%%%%%%%%%%%%%%%%%%%%%%%%%%%%%%%%%%%%%%%%%%%%%%%
%%%%%%%%%%%%%%%%%%%%%%%%%%%%%%%%%%%%%%%%%%%%%%%%%%%%%%%%%%%%%
Closed geodesics  on $S^n$
with a Finsler metric $f $ are the critical points of the
functional
$$F:\Lambda \sn\longrightarrow \R\,;\,
F(\sigma):=\left( \int_0^1 f^2(\sigma'(t))\, dt\right)^{1/2}\,,$$
cf.~\cite[Sec.1]{HR} and~\cite[ch. 1]{Ra89}.
We denote by $\Lambda=\Lambda \sn$ the free loop space,
i.e. the space of $H^1$-maps 
$\sigma: S^1=\mathbb{R}/\mathbb{Z} \rightarrow \sn.$
The function $F$ is 
up to a factor $1/2$
the square root of the energy functional
$E(\sigma)=1/2 \int_0^1 f^2(\sigma'(t))\,dt.$
The functional $F$ agrees with the length functional
$l(\gamma)=\int_0^1 f\left(\gamma'(t)\right)\,dt$
on loops parametrized proportional to arc length.
The
\emph{Morse index} $\ind (c)$ is the maximal
dimension of a subspace
of the tangent space $T_c\Lambda S^n$
on which the hessian
$d^2F_c$ is negative definite,
cf. for example~\cite[ch. 1]{Ra89}.
For a closed geodesic $c$ the iterates $c^k, k\ge 1$ with 
$ c^k(t)=c(kt)$ are
closed geodesics, too. These closed geodesics are 
\emph{geometrically equivalent.} Note that in general
the curve $c^{-1}$ with opposite orientation,
i.e. $c^{-1}(t)=c(-t),$ is not
a closed geodesic since the metric is assumed
to be non-reversible.

For $f=f_0$ we use the following notation:
\begin{equation*}
	F_0(\sigma)=:\left( \int_0^1 f_0^2(\sigma'(t))\, dt\right)^{1/2}\,;
	\,
	l_0(\sigma)=\int_0^1 f_0\left(\sigma'(t)\right)\,dt	\,.
\end{equation*}
%%%%%%%%%%%%%%%%%%%%%%%%%%%%%%%%%%%%%%%%%%%%%%%%%%%%%%%%%%%%%%%%
For the sublevel
sets of the functional $F$
we use the following notation:
$\la^{R}=\{\sigma\in \la\,;\,
F(\sigma)\le R\}.$ 
The free loop space
$\Lambda$ carries a canonical 
$S^1$-action by linear reparame\-tri\-zation
of the curves, i.e. shift of the
initial point. We use the following
notation for quotient spaces 
with respect to the $\sone$-action
and its
sublevel spaces:
$\ola=\la /S^1$ and
$\ola^R=\{\sigma\in \ola\,;\,
F(\sigma)\le R\}.$
For the sublevel sets with respect to the functional $F_0$
we use the following notation:
$\la_0^{R}=\{\sigma\in \la\,;\,
F_0(\sigma)\le R\},$ and 
$\ola_0^R=\{\sigma\in \ola\,;\,
F_0(\sigma)\le R\}.$
%%%%%%%%%%%%%%%%%%%%%%%%%%%%%%%%%%
%%%%%%%%%%%%%%%%%%%%%%%%%%%%%%%%%%%%%%%%%%%%%%%%%%%%%%%%%%%%%%
%%%%%%%%%%%%%%%%%%%%%%%%%%%%%%%%%%%%%%%%%%%%%%%%%%%%%%
The set of prime closed geodesics of positive
length 
of the standard metric $f_0$
equals the subset 
$BS^n\subset \la S^n$ of great circles
which can be identified with the unit tangent
bundle $T^1S^n.$ Then the set of closed geodesics
equals the union 
$\bigcup_{j\ge 1} B^j.$ Here
$B^j:=\{c_0^j,c_0 \in BS^n\}$ is the set of
$j$-fold covered great circles, i.e. great circles
$c_0$ parametrized
proportional to arc length with $l_0(c_0^j)=jl_0(c_0)
=2\pi j.$ The functional 
$F_0:\la S^n \longrightarrow \R$ is a 
Morse-Bott function, i.e. the subsets
$B^j$ are non-degenerate critical submanifolds.
This follows since the dimension of
the kernel of the hessian of a great circle
equals the dimension $2n-1$ of the manifold
$BS^n=T^1S^n.$
For $n=2m-1\ge 3$ we have 
\beqn
H_j\left(T^1S^{2m-1};\z\right)\cong
\left\{
\begin{array}{ccc}
	\z&;&j=0,2m-2,2m-1,4m-3\\
	0&;& \text{otherwise}
\end{array}
\right.\,.
\eeqn
If $\nu_k:N_k \longrightarrow B^k$ is the
negative normal bundle of
the critical submanifold $B^k$
of dimension $\ind(c^k)=(4k-2)(m-1)$
with the associated disc bundle
$\nu_k:DN_k\longrightarrow B^k,$ resp.
sphere bundle 
$SN_k\longrightarrow B^k,$
then the generalized Morse lemma implies
% % % % % % % % % % % % % % % % % % % % % % %
\beqn
H_j(
\la_0^{2\pi k} ,
\la_0^{2\pi (k-1)};\z
)
\cong
H_{j}
(DN_k,SN_k;\z)\,,
\eeqn
% % % % % % % % % % % % % % % % % % % % % % % % %
cf.~\cite[\S 4]{RaMZ}.
The negative normal bundle $\nu_k$ is oriented
for all $k,$ since
$\ind(c_0^2)-\ind(c_0)=4(m-1)$
is even resp. $\gamma_{c_0}=1$ for a great circle
$c_0$,
cf. \cite[Prop.2.2]{Ra89}.
Hence the 
Thom-isomorphism implies
% % % % % % % % % % % % % % % % % % % %
\beq
\label{eq:hlambdasm1}
H_j(
\la_0^{2\pi k},
\la_0^{2\pi (k-1)};\z
)
\cong
H_{j-(4k-2)(m-1)}
(T^1S^{2m-1};\z
)\,.
\eeq
% % % % % % % % % % % % % % % % % % % % %
The functional 
$F_0$ is \emph{perfect,} i.e.
\beq
\label{eq:hlambdaglobal}
H_j(
\la ,
\la^0 ;\z
)
\cong
\bigoplus_{k\ge1}
H_j(
\la_0^{2\pi k},
\la_0^{2\pi (k-1)};\z
)
\eeq
which follows for $m\ge 2$
from the long exact
homology sequence. Hence
%%%%%%%%%%%%%%%%%%%%%%%%%%%%%%%%%%%%%%%%%%%%%%%%%%%%%%%%%
\begin{equation}
	\label{eq:hlambdasm}
	H_j\left(\la,\la^0 ;\z\right)
	\cong
	\left\{
	\begin{array}{lll}
		\z &;& j=2r(m-1); r\ge 1\\
		\z &;& j=2r(m-1)+1, r\ge 2\\
		0 &;& \
		\textrm{otherwise}
	\end{array} 
	\right.	,
\end{equation}
%%%%%%%%%%%%%%%%%%%%%%%%%%%%%%%%%%%%%%%%%%%%%%%%%%%%%%%%%
and the homomorphism
%%%%%%%%%%%%%%%%%%%%%%%%%%%%%%%%%%%%%%%%%%%%%%%%%%%%%%%%
\begin{equation}
	\label{eq:hom-iso}
	H_j\left(\Lambda_0^{2\pi k},\Lambda^0;\q\right)	
	\longrightarrow
	H_j\left(
	\Lambda,\Lambda^0,\q
	\right)
\end{equation}
%%%%%%%%%%%%%%%%%%%%%%%%%%%%%%%%%%%%%%%%%%%%%%%%%%%%%%%%%
induced by the inclusion is an isomorphism for all
$k\ge 1$ and
$j<i(k+1)=(4k+2)(m-1).$ This follows since
$i(k+1)=\ind(c_0^{k+1})=
i(k)+4(m-1).$

The quotient space
$T^1S^n/S^1$ of unparametrized oriented
great circles can be identified with
the Grassmannian $\widetilde{G}(2,2m-2)$
of oriented two-dimensional linear subspaces of
$\R^{2m}.$

The equivariant Morse lemma implies
% % % % % % % % % % % % % % % % % % % % %
\beqn
H_j(
\ola_0^{2\pi k},
\ola_0^{2\pi (k-1)};\z
)
\cong
H_{j}
\left(\overline{DN}_k,
\overline{SN}_k;\z
\right)\,,
\eeqn
% % % % % % % % % % % % % % % % % % % %
cf. \cite[\S4]{RaMZ}.
Here the quotient bundle
$\nu_k:\overline{DN}_k\longrightarrow \overline{B}_k$
resp.
$\nu_k:\overline{SN}_k\longrightarrow \overline{B}_k$
is a bundle with fibre
$D^{i(k)}/\z_k$ resp.
$S^{i(k)-1}/\z_k.$ Here
$i(k)=\ind (c_0^k)=(4k-2)(m-1)$ is the Morse index of
a $k$-fold covered great circle $c_0^k$ as 
a closed
geodesic of the standard metric $f_0.$ 
Then
% % % % % % % % % % % % % % % %
\beqn
H_*\left(
D^{i(k)}/\z_k,
S^{i(k)-1}/\z_k;
\q
\right)
\cong
H_*\left(
D^{i(k)},
S^{i(k)-1};
\q
\right)
\eeqn
% % % % % % % % % % % % % % % %
and the Thom isomorphism implies
% % % % % % % % % % % % % % % % % %
\beqn
H_*(\ola_0^{2\pi k}
,\ola_0^{ 2 \pi (k-1)}
; \mathbb{Q})
\cong
H_{*-i(k)}(\widetilde{G}\left(2,2m-2\right),
\mathbb{Q})\,.
\eeqn
% % % % % % % % % % % % % % % % % % % % % %
Non-trivial homology only occurs in even
dimensions since
% % % % % % % % % % % % % % % % % % % % % % % %
\beq
\label{eq:homteinssm}
H_j(
\widetilde{G}(2,2m-2)
;\z
)\cong
\left\{
\begin{array}{ccc}
	\z&;&j=0,4m-4\\
	\z\oplus \z&;& j=2m-2\\
	0 &;& \text{otherwise}
\end{array}
\right.
\,.
\eeq
% % % % % % % % % % % % % % % % % % % %
This follows from the Gysin sequence
of the $S^1$-bundle
$T^1\sm \longrightarrow \widetilde{G}(2,2m),$
cf. \cite[Beweis Satz 4.9]{RaDiss}.
Hence we obtain:
\beqn
H_{*}(\ola,\ola^0;
\mathbb{Q})
\cong \bigoplus_{k\ge 1} 
H_{*}(\ola_0^{2\pi k},
\ola_0^{2\pi (k-1)}
;\mathbb{Q})\,,
\eeqn
% % % % % % % % % % % % % % % % % % % % % %
which implies
% % % % % % % % % % % % % % % % % % % % % %
\begin{equation}
	\label{eq:homlasone}
	H_j(\ola,\ola^0;\q)
	\cong
	\left\{
	\begin{array}{lll}
		\q &;& j\ge 2(m-1), j \text{ even }, j\not=2k(m-1), k\ge 2\\
		\q \oplus \q &;&j=2k(m-1), k\ge 2 \\
		0 &;& \
		\textrm{otherwise}
	\end{array} 
	\right.	,
\end{equation}
% % % % % % % % % % % % % % % % % % % % % % %
\cite[Rem.2.5(a)]{Ra89}.
Therefore the functional
$F_0: \ola \longrightarrow \R$ can be seen as a perfect
Morse Bott function
for
rational coefficients, too.
In particular the homomorphism
%%%%%%%%%%%%%%%%%%%%%%%%%%%%%%%%%%%%%%%%%%%%%%%%%%
\beqn
%\label{eq:inclusion3}
i_*\,:\,H_j(\ola_0^{ 2 \pi  k},
\ola^0 ;\mathbb{Q})
\longrightarrow 
H_j(\ola,
\ola^0;\mathbb{Q})
\eeqn
%%%%%%%%%%%%%%%%%%%%%%%%%%%%%%%%%%%%%%%%%%%%%%%%%%%
induced by the inclusion is an isomorphism for all
$k\ge 1$ and $j < i(k+1)= (4k+2)(m-1)\,.$
This follows since $i(k+1)=\ind (c_0^{k+1})=
i(k)+4(m-1).$
%%%%%%%%%%%%%%%%%%%%%%%%%%%%%%%%%%%%%%%%%%%%%%%%%%%%%
%%%%%%%%%%%%%%%%%%%%%%%%%%%%%%%%%%%%%%%%%%%%%%%%%%%%%
\begin{lemma}
	\label{lem:hom-projection}
	$n=2m-1, m\ge 2.$
	Let $a_k \in H_{4k(m-1)}(\Lambda  ,\Lambda^0;\z)
	\cong \z, k\ge 1$ be a generator.
	Then the canonical projection $\rho: \left(\Lambda,
	\Lambda^0 \right) \longrightarrow
	(\ola ,\ola	^0)$
	induces an injective homomorphism
	\begin{equation*}
		\rho_*:
		H_{4k(m-1)} (\Lambda,
		\Lambda^0;\z)
		\cong \z \longrightarrow
		H_{4k(m-1)}
		(\ola,\ola^0;\z)	
	\end{equation*}
	with $\rho_*(a_k)=k \tilde{a}_k\not=0$
	and $\tilde{a}_k$ is not a torsion element.
\end{lemma}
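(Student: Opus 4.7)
The plan is to reduce the computation, via the perfectness of $F_0$, to the single Morse--Bott level $B^k=T^1 S^{2m-1}$, and then to exhibit the factor $k$ as the degree of the $\z_k$-orbit map on the fibre of the negative normal bundle $N_k$.

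First I would use the direct-sum decomposition \eqref{eq:hlambdaglobal} together with the parity/degree count already carried out in the excerpt to see that at degree $j=4k(m-1)$ only the stratum $k'=k$ contributes on the non-quotient side: solving $j=i(k')+j'$ with $j'\in\{0,2m-2,2m-1,4m-3\}$ forces $k'=k$, $j'=2m-2$ by the parity of $m-1$. Hence the inclusion is an isomorphism $H_{4k(m-1)}(\la_0^{2\pi k},\la_0^{2\pi(k-1)};\z)\stackrel{\cong}{\to} H_{4k(m-1)}(\la,\la^0;\z)$, and the Thom isomorphism for the oriented bundle $N_k\to T^1 S^{2m-1}$ identifies the generator $a_k$ with a generator $\alpha\in H_{2(m-1)}(T^1 S^{2m-1};\z)\cong\z$. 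A parallel count on the quotient side, using only the non-torsion even-degree homology $H_0$, $H_{2m-2}$, $H_{4m-4}$ of $\widetilde G(2,2m-2)$, shows that in this degree $B^k$ is again the only contributing stratum through the middle group $H_{2m-2}(\widetilde G)$.

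Next I would analyse the quotient bundle $\overline{N}_k\to\widetilde G(2,2m-2)$, whose fibre pair is $(D^{i(k)}/\z_k,S^{i(k)-1}/\z_k)$. The crucial input is that the $\z_k$-action on the oriented fibre $D^{i(k)}$ is orientation-preserving: this is essentially the statement $\gamma_{c_0}=1$ cited above, since the generator of the isotropy $\z_k$ acts on $N_k$ as the iteration map, which preserves the chosen orientation. Granted this, the top relative integral fibre homology is $\z$, the orbit map $D^{i(k)}\to D^{i(k)}/\z_k$ induces multiplication by $k$ there, and naturality of the Thom isomorphism translates $\rho_*$ into $k\cdot\pi_*$, where $\pi:T^1 S^{2m-1}\to\widetilde G(2,2m-2)$ is the free $S^1/\z_k$-quotient bundle map.

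To finish, it remains to show that $\pi_*(\alpha)$ is non-torsion in $H_{2(m-1)}(\widetilde G(2,2m-2);\z)\cong\z\oplus\z$. Identifying $\widetilde G(2,2m-2)$ with the complex quadric $Q^{2m-2}$, its integral (co)homology is torsion-free and concentrated in even degrees, so $H_{2m-1}(\widetilde G;\z)=H_{2m-3}(\widetilde G;\z)=0$. The Gysin sequence of the oriented $S^1$-bundle $\pi$ then produces the left-exact sequence
\[
0\longrightarrow H_{2m-2}(T^1 S^{2m-1};\z)\stackrel{\pi_*}{\longrightarrow} H_{2m-2}(\widetilde G(2,2m-2);\z)\cong\z\oplus\z,
\]
so $\pi_*$ is injective and $\tilde a_k:=\pi_*(\alpha)$ is non-torsion. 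Chasing the naturality square gives $\rho_*(a_k)=k\tilde a_k\ne 0$, establishing both the multiplicity-$k$ statement and the injectivity of $\rho_*$. The main obstacle I anticipate is verifying the orientation-preservation of the $\z_k$-action on $N_k$ in the second step; once that is in place, the factor $k$ is an immediate fibre-degree computation and the non-torsion assertion follows cleanly from the complex-quadric structure of $\widetilde G(2,2m-2)$ via the Gysin sequence.
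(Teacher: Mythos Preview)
Your argument is correct and follows essentially the same route as the paper: localize to the critical level $B^k$, read off the factor $k$ from the $\z_k$-quotient map on the fibre of $N_k$, and finish with the Gysin sequence of the circle bundle $T^1S^{2m-1}\to\widetilde G(2,2m-2)$. The only cosmetic differences are that the paper records the image first as $ks\,\overline a_k$ for an a priori unknown $s>0$ (since $\overline{N}_k$ is not literally a disc bundle, so your ``Thom naturality'' on the quotient side is shorthand) and then uses the Gysin sequence to force $s=1$, whereas you assert $s=1$ directly and reserve Gysin for the non-torsion claim; and the paper justifies the fibrewise degree $k$ by observing that the $\z_k$-action on $D^{i(k)}$ is free on an open dense subset (from $\ind(c_0^d)<\ind(c_0^k)$ for proper divisors $d\mid k$), which is the companion to your orientation-preservation check.
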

%%%%%%%%%%%%%%%%%%%%%%%%%%%%%%%%%%%%%%%%%%%%%%%%%%%%%%%%%%
%%%%%%%%%%%%%%%%%%%%%%%%%%%%%%%%%%%%%%%%%%%%%%%%%%%%%%%%%%
\begin{proof}
	The projection 
	$\rho: \la\longrightarrow \ola$ 
	induces the homomorphism
	%%%%%%%%%%%%%%%%%%%%%%%%%%%%%%%%%%%%%%%%%%%%%%%%%%
	\beqn
	\rho_*:
	H_{4k(m-1)}
	(
	\la, \la^0;\z
	)\cong \z
	\longrightarrow
	H_{4k(m-1)}
	(
	\ola, \ola^0;\z
	)\,.
	\eeqn
	%%%%%%%%%%%%%%%%%%%%%%%%%%%%%%%%%%%%%%%%%%%%%%
	The homomorphism
	%%%%%%%%%%%%%%%%%%%%%%%%%%%%%%%%%%%%%%%%%%%%%%%%
	\beqn
	\rho_*:
	H_{4k(m-1)}
	(
	\la_0^{2\pi k},
	\la_0^{2\pi (k-1)};
	\z
	)
	\longrightarrow
	H_{4k(m-1)}
	(
	\ola_0^{2\pi k},
	\ola_0^{2\pi (k-1)};
	\z
	)
	\eeqn
	%%%%%%%%%%%%%%%%%%%%%%%%%%%%%%%%%%%%%%%%%%%%%%%%%%%%
	can be expressed by the homomorphism
	%%%%%%%%%%%%%%%%%%%%%%%%%%%%%%%%%%%%%%%%%%%%%%%%%%%%%
	\beqn
	\rho_*:
	H_{4k(m-1)}
	\left(
	DN_k,SN_k;\z
	\right)\cong \z
	\longrightarrow
	H_{4k(m-1)}
	\left(
	\overline{DN}_k,
	\overline{SN}_k;\z
	\right)
	\eeqn
	%%%%%%%%%%%%%%%%%%%%%%%%%%%%%%%%%%%%%%%%%%%%%%%%%%%%%%%%%%%%
	which is a multiplication with the
	number $k,$ i.e.  for a generator $a_k'$ with $0\not=a_k'
	\in H_{4k(m-1)}(DN_k,SN_k;\z)
	\cong \z$ we have
	$\rho_*(a_k')=k s\overline{a}_k$
	for a generator\\
	$\overline{a}_k \in 
	H_{4k(m-1)}
	\left(
	\overline{DN}_k,
	\overline{SN}_k;\z
	\right)$ and an integer $s>0.$
	This follows since
	the homomorphism
	%%%%%%%%%%%%%%%%%%%%%%%%%%%%%%%%%%%%%%%%%%%%%%%%%%%%%%%%%%%%
	\begin{eqnarray*}
		H_{4k(m-1)}(D^{4k(m-1)},
		S^{4k(m-1)-1};\z)\cong \z&\\
		\longrightarrow
		&H_{4k(m-1)}(D^{4k(m-1)}/\z_k,
		S^{4k(m-1)-1}/\z_k;\z)
		\cong \z
	\end{eqnarray*}
	%%%%%%%%%%%%%%%%%%%%%%%%%%%%%%%%%%%%%%%%%%%%%%%%%%%%%%%%%%%%
	induced by the canonical projection is a
	multiplication by $k.$
	This follows since the isometric
	$\z_k$-action on the disc
	$D^{4k(m-1)}$
	is free on an open and dense subset,
	which we see as follows:
	For any divisor $d|k, d<k$ we have
	the following inequality
	for the indices of coverings
	$c^k_0$ of a great circle $c_0:$
	$\ind(c^d_0)<\ind(c^k_0).$
	Actually one can show $s=1,$ 
	i.e. $\rho_*(a_k)=k\tilde{a}_k.$
	This follows
	from the Gysin sequence of the
	$S^1$-bundle
	$T^1\sm\longrightarrow \widetilde{G}(2,2m-2)$
	and 
	Equation~\eqref{eq:homteinssm}.
\end{proof}
%%%%%%%%%%%%%%%%%%%%%%%%%%%%%%%
\begin{remark}
	The $S^1$-action on $\Lambda$
	induces the homomorphism
	\label{rem:transfer}
	\beqn
	\Delta:
	H_{4k(m-1)}
	(
	\la,
	\la^0;\z 
	)\cong \z \cdot a_k
	\longrightarrow
	H_{4k(m-1)+1}
	(
	\la,
	\la^0;\z
	)\,,
	\eeqn
	cf. \cite[(17.1)]{GH}.
	The homomorphism is used
	to define the 
	\emph{Batalin Vilkovisky algebra,}
	cf. \cite[Thm.5.4]{CS}.
	% % % % % % % % % % % % % % % % % % % % % % %
	It can be expressed as composition 
	$\Delta=\tau \circ \rho_*$
	of the 
	homomorphism 
	% % % % % % % % % % % % % % % % % % % % % % %
	\beqn
	\rho_*:
	H_{4k(m-1)}
	(
	\la,
	\la^0;\z
	)\cong \z \cdot a_k
	\longrightarrow
	H_{4k(m-1)}
	(
	\ola,
	\ola^0;\z
	)
	\eeqn
	%%%%%%%%%%%%%%%%%%%%%%%%%%%%%%%%%%%%%%%%%%%%%%%%%%
	induced by the canonical projection and
	the \emph{transfer map}
	%%%%%%%%%%%%%%%%%%%%%%%%%%%%%%%%%%%%%%%%%%%%%%%%%%%
	\beqn
	\tau:
	H_{4k(m-1)}
	(
	\ola,
	\ola^0;\z
	)
	\longrightarrow
	H_{4k(m-1)+1}
	(
	\la,
	\la^0 ;\z
	)\cong \z \cdot  \tilde{a}_k\,.
	\eeqn
	%%%%%%%%%%%%%%%%%%%%%%%%%%%%%%%%%%%%%%%%%%%%%%%%%%%%
	Hence we obtain
	$\Delta (a_k)=\tau (\rho_*(a_k))=
	k \tilde{s} \tilde{a}_k$ for a positive integer $\tilde{s}$
	and a generator 
	$\tilde{a}_k \in
		H_{4k(m-1)+1}(
	\la,\la^0 ;\z).$
	The homomorphism can be computed,
	cf. \cite[Satz~4.13]{RaDiss} resp.
	\cite[Lem.6.2]{HR}, it follows that
	$\tilde{s}=2.$
\end{remark}
%%%%%%%%%%%%%%%%%%%%%%%%%%%%%%%%%%%%%%%%%%%%%%%%%%%%%%%%%%%%%%%
	%%%%%%%%%%%%%%%%%%%%%%%%%%%%%%%%%%%%%%%%%%%%%%%%%%%%%%%%%%%%%%%%%%%%%%%
	\begin{remark}
	Since $c$ is prime and since for all divisors $q$ of $r$
	with $q<r$ the inequality $\ind(c^q)<\ind(c^r)$ holds, we can
	conclude that for $r\ge 1$
	the following holds:
	There are generators 
	%%%%%%%%%%%%%%%%%%%%%%%%%%%%%%%%%%%%%%%%%%%%%%%%%%%%%%%%%%%%
	\beq
	\label{eq:sr}
	s_r, t_r\in H_*(\la^{rL},\la^{(r-1)L} ;\z)\,;\,
	S_r \in H_*(\ola^{rL},\ola^{(r-1)L};\z)
	\eeq
	%%%%%%%%%%%%%%%%%%%%%%%%%%%%%%%%%%%%%%%%%%%%%%%%%%%%%%%%%%%
	with $\deg(s_r)=\deg(S_r)=\deg(t_r)-1=\ind(c^r)=j$
	such that the induced projection
	%%%%%%%%%%%%%%%%%%%%%%%%%%%%%%%%%%%%%%%%%%%%%%%%%%%%%%%%%%%%%%%%%%%%
	\beq
	\rho_*: H_j(\la^{rL},\la^{(r-1)L};
	\z)\cong \z\cdot s_r
	\longrightarrow
	H_{j}(\ola^{rL},\ola^{(r-1)L};
	\z)\cong \z \cdot S_r
	\eeq
	%%%%%%%%%%%%%%%%%%%%%%%%%%%%%%%%%%%%%%%%%%%%%%%%%%%%%%%%%%%%%%%%%%%%%%
	satisfies
	%%%%%%%%%%%%%%%%%%%%%%%%%%%%%%%%%%%%%%%%%%%%%%%%%%%%%%%%%%%%%%%%%%%%%%
	\beq
	\label{eq:psr}
	\rho_*(s_r)= r \cdot S_r \,,
	\eeq
	%%%%%%%%%%%%%%%%%%%%%%%%%%%%%%%%%%%%%%%%%%%%
	cf. \cite[\S3]{RaMZ}.
	This will be crucial in the Proof
	of Theorem~\ref{thm:length} given in Section~\ref{sec:proofs}.
	For the transfer homomorphism 
	%%%%%%%%%%%%%%%%%%%%%%%%%%%%%%%%%%%%%%%%%%%%%%%%%%%%%%%%%%%%%%%%%%%%%%%%%%%%%
	\beqn
	\Delta: H_j(\Lambda^{rL},\Lambda^{(r-1)L};
	\z)\cong\z \cdot s_r
	\longrightarrow
	H_{j+1}(\ola^{rL},\ola^{(r-1)L};
	\z)\cong\z \cdot t_r
	\eeqn
	%%%%%%%%%%%%%%%%%%%%%%%%%%%%%%%%%%%%%%%%%%%%%%%%%%%%%%%%%%%%%%%%%%%%%%%%%%%%%
	one obtains $\Delta (s_r)= r\cdot t_r\,.$
	%%%%%%%%%%%%%%%%%%%%%%%%%%%%%%%%%%%%%%%%%%%%%%%%%%%%%%%%%%%%%%%%%%%%%%%%%%%%%%
\end{remark}
%%%%%%%%%%%%%%%%%%%%%%%%%%%%%%%%%%%%%%%%%%%%%%%%%%%%%%%%%%%%%%%%%
%%%%%%%%%%%%%%%%%%%%%%%%%%%%%%%%%%%%%%%%%%%%%%%%%%%%%%%%
\section{Morse theory for a metric with only
	one closed geodesic}
%%%%%%%%%%%%%%%%%%%%%%%%%%%%%%%%%%%%%%%%%%%%%%%%%%%%%%%%
%%%%%%%%%%%%%%%%%%%%%%%%%%%%%%%%%%%%%%%%%%%%%%%%%%%%%%%%
%%%%%%%%%%%%%%%%%%%%%%%%%%%%%%%%%%%%%%%%%%%%%%%%%%%%%%%%
In this section we study non-reversible
Finsler metrics on $S^{2m-1}$ for which all closed
geodesics with length $\le 2\pi p_m D^3(f)$ are geometrically
equivalent to the closed geodesic $c$
of length $L=l(c).$ We will show that
this assumption determines the sequence
$\ind(c^r), rL \le 2\pi p_m D^3$ completely.
%%%%%%%%%%%%%%%%%%%%%%%%%%%%%%%%%%%%%%%%%%%%%%%%%%%%%%%%
\begin{lemma}
\label{lem:one-closed-geodesic}	
Let $f$ be a non-reversible Finsler metric on
the sphere $S^n,n\ge 2$ with distortion 
$D=D(f).$	We assume that all closed geodesics
with length $\le 2\pi D$ 
are non-degenerate. 
Then there exists a prime closed
geodesic $c$ 
whose length satisfies
$L:=l(c) \le 2\pi D$ and with
$\ind (c)\le n-1.$
\end{lemma}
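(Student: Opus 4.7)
The plan is to apply the classical Lyusternik--Fet minimax for the generator of $H_{n-1}(\olasn;\q)\cong\q$ to produce a closed geodesic of length $\birk=\birk(f)$, deduce the length and index bounds from the distortion and non-degeneracy hypotheses, and finally descend to the underlying prime closed geodesic.

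By definition of $\birk$ the minimax principle furnishes a closed geodesic $c_1$ with $l(c_1)=\birk$, and the inequality $\birk\le 2\pi\dil$ is recorded in the introduction as an immediate consequence of~\eqref{eq:dil} (a representative of the Birkhoff class can be built from great circles of $f_0$, of $F_0$-value $2\pi$, hence of $F$-value at most $2\pi\dil$). Thus $l(c_1)\le 2\pi\dil$ and the non-degeneracy hypothesis applies at $c_1$.

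The standard $\sone$-equivariant Morse argument (cf.~\cite{Ra89}) then forces $\ind(c_1)\le n-1$: at a non-degenerate critical $\sone$-orbit of Morse index $j$, the local equivariant homology is concentrated in degrees $j$ and $j+1$, so an $(n-1)$-dimensional class cannot be captured at level $\birk$ unless some critical orbit at this level has index at most $n-1$. Finally, if $c_1$ is prime we set $c=c_1$; otherwise $c_1=c^k$ with a prime closed geodesic $c$ and some $k\ge 2$, in which case $l(c)=\birk/k\le 2\pi\dil$, and the natural inclusion of the negative subspace at $c$ into that at $c^k$ (equivalently, Bott's iteration inequality) yields $\ind(c)\le \ind(c^k)=\ind(c_1)\le n-1$. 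The main subtlety lies in the index bound on $c_1$, which rests on the $\sone$-equivariant Morse lemma for non-reversible Finsler metrics as developed in~\cite{Ra89}; all remaining steps are immediate from the definitions and~\eqref{eq:dil}.
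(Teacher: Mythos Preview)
Your argument is correct and follows essentially the same route as the paper: both trap the Birkhoff class below level $2\pi D$ via the distortion inequality~\eqref{eq:dil} and then read off the index bound from Morse theory, the only cosmetic difference being that the paper works directly in $\Lambda$ (using the inclusion $\Lambda_0^{2\pi}\subset\Lambda^{2\pi D}$ and the Morse inequalities) rather than in $\ola$ through the minimax value~$\birk$. Your explicit passage to a prime geodesic via Bott's inequality~\eqref{eq:bott} fills a step the paper leaves implicit; note only that for a non-degenerate $S^1$-orbit the rational local homology of $\ola$ sits in degree $j$ alone (degrees $j$ and $j{+}1$ is the statement for $\Lambda$), though this does not affect your conclusion $\ind(c_1)\le n-1$.
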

%%%%%%%%%%%%%%%%%%%%%%%%%%%%%%%%%%%%%%%%%%%%%%%%%%%%%%%%%%%%%%%%
\begin{proof}
Equation~\eqref{eq:dil}
implies that
$\Lambda_0^{2\pi}\subset \Lambda^{2\pi D}.$
Since 
\begin{equation*}
H_{n-1}\left(
\Lambda_0^{2\pi},\Lambda^0;\q
\right)	
\longrightarrow 
H_{n-1}
\left(
\Lambda,\Lambda^0;\q
\right)\cong \q	
\end{equation*}
is an isomorphism, cf.
Equation~\eqref{eq:hom-iso},
we conclude that the homomorphism 
\begin{equation*}
H_{n-1}\left(	
\Lambda^{2\pi D}, \Lambda^0;\q
\right)\longrightarrow 
H_{n-1}\left(
\Lambda , \Lambda^0;\q
\right)\cong \q
\end{equation*}
is surjective, i.e.
$\dim H_{n-1}\left(\Lambda^{2\pi D},\Lambda^0;\q
\right)	\ge 1.$
It follows from the Morse
inequalities for the space
$\Lambda^{2\pi D}$
that there is a closed geodesic
$c$ with length $l(c)\le 2\pi D$
and index $\ind(c)\le n-1,$
cf.~\cite[Sec.2]{Ra89}.
\end{proof}
%%%%%%%%%%%%%%%%%%%%%%%%%%%%%%%%%%%%%%%%%%%%%%%%%%%%%%%%%%%%%%%%%
%Lemma~\ref{lem:one-closed-geodesic} implies that %there
%is at least one closed geodesic with length
%$\le 2\pi D$ and index $\le n-1.$ 
We later use the following 
%%%%%%%%%%%%%%%%%%%%%%%%%%%%%%%%%%%%%%%%%%%%%%%%%%%%%%%%%%%%%%%
\begin{assumption}
	\label{assumption}
	For $m\ge 2$ let $\prm$
	be the smallest prime number which does
	not divide $(m-1)$ nor $m,$
	cf. Lemma~\ref{lem:pm}.
	Given a non-reversible Finsler metric $f$ on a
	sphere of dimension $n=2m-1\ge 3$
	with distortion $D=D(f)$
	we assume that all closed geodesics $\gamma$ with
	$L(\gamma)\le \lm:=2\pi \prm D^3$ are non-degenerate
	and that all closed geodesics with
	length $\le \lm=2\pi \prm D^3$
	and index $\le 4\prm(m-1)+2$ are geometrically
	equivalent.
\end{assumption}
%%%%%%%%%%%%%%%%%%%%%%%%%%%%%%%%%%%%%%%%%%%%%%%%%%%%%%%%%%%%%%%%
	Hence we conclude from
	Lemma~\ref{lem:one-closed-geodesic}
	that there is a prime closed geodesic $c$ 
	such that
	every closed geodesic $\gamma$ with 
	$l(\gamma)\le \lm= 2\pi \prm D^3$
	and $\ind(\gamma)\le
	4\prm (m-1)+2$ is up to the
	choice of the initial point a covering 
	of the closed geodesic $c,$ i.e.
	there is a positive integer $r\ge 1$ 
	and an element $z\in \sone=\R/\z=[0,1]/\{0,1\}$ such
	that $\gamma=z.c^r.$
	Here $z.c(t)=c(t+z)$ defines the 
	canonical $\sone$-action on the free
	loop space $\Lambda=\Lambda \sn$ leaving the functional
	$F$ invariant.
%%%%%%%%%%%%%%%%%%%%%%%%%%%%%%%%%%%%%%%%%%%%%%%%%%%%%%%%%
\begin{lemma}
	\label{lem:pm}
	For $m\ge 2$ denote by $\prm$ the smallest prime number,
	which is neither a divisor of $(m-1)$ nor of $m.$
	Then $p_2=3, p_3=5,$ and for $m\ge 4: 3 \le \prm \le m+1.$
\end{lemma}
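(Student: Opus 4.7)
The plan is to split the argument into the base cases $m\in\{2,3\}$, the trivial lower bound, and the main upper bound $p_m\le m+1$ for $m\ge 4$, where the latter is handled by distinguishing whether $m+1$ has an odd prime factor.

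For the base cases I would just enumerate the prime divisors of $(m-1)m\in\{2,6\}$ directly and read off $p_2=3$, $p_3=5$. The lower bound $p_m\ge 3$ for all $m\ge 2$ is immediate: one of the consecutive integers $m-1,m$ is even, so $2\mid (m-1)m$, and hence $p_m\neq 2$.

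For the upper bound $p_m\le m+1$ when $m\ge 4$ the key observation is that \emph{any odd prime divisor $q$ of $m+1$ is automatically coprime to $(m-1)m$}. Indeed, $\gcd(m,m+1)=1$, and any common divisor of $m-1$ and $m+1$ must divide their difference $(m+1)-(m-1)=2$; since $q$ is odd this forces $q\nmid m-1$. Hence $q\le m+1$ is a prime avoiding $(m-1)m$, so $p_m\le q\le m+1$. This disposes of every $m$ for which $m+1$ is not a power of $2$.

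The only remaining case is $m+1=2^k$, which under $m\ge 4$ forces $m\in\{7,15,31,63,\dots\}$. Here I would turn to the auxiliary integer $m-2=2^k-3$, which is odd and $\ge 5$. Any prime factor $p$ of $m-2$ is odd and satisfies $p\le m-2$; the gcd computations $\gcd(m-2,m)=\gcd(m-2,2)=1$ (both odd) and $\gcd(m-2,m-1)=\gcd(m-2,1)=1$ show that $p$ divides neither $m$ nor $m-1$, giving $p_m\le p\le m-2<m+1$. The main step, and the only non-routine move, is \emph{spotting the correct auxiliary integer} in the Mersenne-style case: $m+1$ itself contributes no odd prime, so one must look elsewhere, and $m-2$ is the right companion precisely because parity and distance $\le 2$ from both $m$ and $m-1$ force the required coprimality. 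Once this is identified, the whole lemma reduces to a short parity and gcd calculation.
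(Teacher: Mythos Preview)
Your proof is correct and follows essentially the same approach as the paper: both arguments pick an odd prime factor of one of the neighbours $m+1$ or $m-2$ and verify via elementary $\gcd$ computations that it misses $(m-1)m$. The only cosmetic difference is the order of the case split --- you try $m+1$ first and fall back to $m-2$ when $m+1=2^k$, while the paper tries $m-2$ first and falls back to $m+1$ when $m-2=2^s$.
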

%%%%%%%%%%%%%%%%%%%%%%%%%%%%%%%%%%%%%%%%%%%%%%%%%%%%%%%%%%%%%%%%%%%%%%%
\begin{proof}	
	For $m \le 5$ we have: 	
	$p_2=3, p_3=p_4=5.$
	Assume $m \ge 5.$ 
	If $m \equiv 2\pmod{3}$ then $\prm=3.$
	If $m-2\not=2^s$ for some $s$ choose a prime factor $q\ge 3$ of $m-2\ge 4.$ If $m-2=2^s,$ choose a prime factor
	$q\ge 3$ of $m+1.$ Then $p_m\le q \le m+1,$
	and hence 	$3 \le p_m\le m+2$ for
	all $m\ge 2.$ 	
\end{proof}
%%%%%%%%%%%%%%%%%%%%%%%%%%%%%%%%%%%%%%%%%%%%%%%%%%%%%%%%%%%%%%%%%%%%%%%
The invariant 
$\gamma_c \in \left\{\pm 1/2,\pm 1\right\}$
of a prime closed geodesic is defined as
follows:
$\gamma_c=\pm 1$ if and only if $\ind(c^2)-\ind(c)$ is even and
$\gamma_c>0$ if and only if $\ind (c) $ is even, cf.~\cite[Def. 1.6]{Ra89}.
%%%%%%%%%%%%%%%%%%%%%%%%%%%%%%%%%%%%%%%%%%%%%%%%%%%%%%%%%%%%%%%%%%%%%%
%%%%%%%%%%%%%%%%%%%%%%%%%%%%%%%%%%%%%%%%%%%%%%%%%%%%%%%%%%%%%%%%%%%	
\begin{lemma}
	\label{lem:morse}
	Let Assumption~\ref{assumption}
	be satisfied, i.e. there exists a prime
	closed geodesic $c$ with $L=l(c)\le 2\pi D$ such
	that all closed geodesics $\gamma$
	with length $l(\gamma)\le 2\pi p_m D^3$
	and index 	$\ind(\gamma)\le
	4\prm (m-1)+2$ 
	are geometrically equivalent to $c,$
	cf. Lemma~\ref{lem:one-closed-geodesic}.
	
	We use the following
	notation for Betti numbers of the 
	quotients
	$\ola^{2\pi D \prm}$ and $\ola^{2\pi D^3 \prm}$
	of the sublevel sets
	by the canonical $S^1$-action: 
	\begin{equation*}
	\overline{\beta}_j:=
	\dim H_j(\ola^{2\pi D \prm},
	\ola^0;\q)\,,\,
	\overline{\beta}_j^*:=\dim H_j(\ola^{2\pi D^3\prm},
	\ola^0;\q)\,.
	\end{equation*}
Let $L_1=2\pi p_m D, L_3=2\pi p_m D^3=L_1 D^2$ and
	let
	%%%%%%%%%%%%%%%%%%%%%%%%%%%%%%%%%%%%%%%%%%%%%
	\begin{eqnarray*}
	v_j&:=&\#\{1\le r\le \leins/L;\, \ind(c^r)=j, r\equiv 1\pmod{2}
	\mbox{ or } \gamma_c=\pm 1\}\\
	v_j^*&:=&
	\#\{1\le r\le \lm/L;\, \ind(c^r)=j, r\equiv 1\pmod{2}
	\mbox{ or } \gamma_c=\pm 1\}\,.		
	\end{eqnarray*}
%%%%%%%%%%%%%%%%%%%%%%%%%%%%%%%%%%%%%%%%%%%%%%%%%%%%%%%%
Then for all even $j \le 4\prm (m-1)+2:$
%%%%%%%%%%%%%%%%%%%%%%%%%%%%%%%%%%%%%%%%%%%%%%%%%%%%%%%%%%%%%%%%
\begin{equation*}
		\overline{\beta}_j = v_j\,;\, 
	\overline{\beta}_j^*=v_j^*
\end{equation*}
%%%%%%%%%%%%%%%%%%%%%%%%%%%%%%%%%%%%%%%%%%%%%%%%%%%%%%%%%%%%%%%%
and $\overline{\beta}_j=\overline{\beta}_j^*=0$ for all odd 
$j \le 4\prm (m-1)+2.$
\end{lemma}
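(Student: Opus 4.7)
The plan is to compute the Betti numbers of $(\ola^{L_1},\ola^0)$ and $(\ola^{L_3},\ola^0)$ via the filtration by sublevel sets
\begin{equation*}
\ola^0 \subset \ola^{L} \subset \ola^{2L} \subset \cdots \subset \ola^{sL} \subset \cdots\,,
\end{equation*}
whose successive strata, by Assumption~\ref{assumption}, each contain a single critical orbit $S^1\cdot c^r$. Applying the equivariant Morse lemma of~\cite[\S4]{RaMZ} at the non-degenerate orbit with isotropy $\z_r$ identifies the pair $(\ola^{rL},\ola^{(r-1)L})$ with the $\z_r$-quotient of the negative normal disk/sphere pair of $c^r$, and the rational Thom isomorphism delivers the local contribution
\begin{equation*}
H_j(\ola^{rL},\ola^{(r-1)L};\q) \cong \begin{cases} \q, & j=\ind(c^r)\text{ and }(r\text{ odd or }\gamma_c=\pm 1), \\ 0, & \text{otherwise,}\end{cases}
\end{equation*}
the dichotomy being governed by whether the $\z_r$-action on the negative disk preserves orientation (cf.~\cite[Def.~1.6, Prop.~2.2]{Ra89}).

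Next I would show $\gamma_c\in\{+1,+\tfrac12\}$. The distortion inequality~\eqref{eq:dil} provides the inclusion $\ola_0^{2\pi}\subset\ola^{L_1}$, and combining the isomorphism stated after~\eqref{eq:homlasone} with~\eqref{eq:homlasone}~itself yields a non-trivial class in $H_{2(m-1)}(\ola^{L_1},\ola^0;\q)\cong\q$. By the local formula just derived, this class must be supported on some iterate $c^r$ with $\ind(c^r)=2(m-1)$, which is in particular even. But for $\gamma_c=-1$ every $\ind(c^r)$ is odd, and for $\gamma_c=-\tfrac12$ the only contributing iterates (those with $r$ odd) again have $\ind(c^r)$ odd; both possibilities are therefore ruled out, so $\gamma_c\in\{+1,+\tfrac12\}$ and every non-zero local contribution lies in an even degree.

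With all local contributions confined to even total degree, the filtration spectral sequence with $E^1$-page $E^1_{r,*}=H_*(\ola^{rL},\ola^{(r-1)L};\q)$ collapses at $E^1$: each differential $d_k$ strictly decreases total degree by one and would map between groups of opposite parity, one of which is zero. Consequently
\begin{equation*}
\overline{\beta}_j=\sum_{rL\le L_1}\rk H_j(\ola^{rL},\ola^{(r-1)L};\q)=v_j
\end{equation*}
for every even $j\le 4\prm(m-1)+2$, and $\overline{\beta}_j=0$ for odd $j$ in the same range; the identical reasoning on $(\ola^{L_3},\ola^0)$ yields $\overline{\beta}_j^*=v_j^*$.

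The principal obstacle is the second step: one must correctly relate the orientability of the $\z_r$-action on the negative bundle to the invariant $\gamma_c$ and then tie this to the global rational homology of $\ola$ so as to pin down the sign of $\gamma_c$, after which the spectral-sequence collapse that delivers the claim is routine.
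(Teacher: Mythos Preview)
Your argument is correct and follows essentially the same route as the paper: compute the local $S^1$-critical groups of the iterates $c^r$ (citing \cite[\S4]{RaMZ} and \cite{Ra89}), observe that they are all concentrated in a single parity, and deduce that the Morse filtration degenerates so that $\overline{\beta}_j=v_j$. The paper phrases the last step via the Morse inequalities $v_j=\overline{\beta}_j+q_j+q_{j-1}$ with $q_j\ge 0$, while you phrase it as collapse of the filtration spectral sequence at $E^1$; these are the same mechanism.

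The one substantive difference is your second step. The paper passes directly from the parity statement $v_j(c^r)=1\Rightarrow j\equiv\ind(c)\pmod 2$ to ``$v_j=0$ for all odd $j$'', which tacitly uses that $\ind(c)$ is even; this is only established \emph{after} Lemma~\ref{lem:morse} in the paper (via Lemma~\ref{lem:two} and Equation~\eqref{eq:BettiSone}). Your insertion of the argument that $\gamma_c\in\{+1,+\tfrac12\}$---using the inclusion $\ola_0^{2\pi}\subset\ola^{L_1}$ to produce a nonzero class in degree $2(m-1)$ and then reading it off the $E^1$-page---closes this gap and makes the proof of the lemma self-contained. (A small quibble: you write $H_{2(m-1)}(\ola^{L_1},\ola^0;\q)\cong\q$, but at this stage you only know it is nonzero, which is all you need.) Note also that the filtration by the levels $rL$ should, strictly speaking, be refined by any further critical values coming from geodesics of index $>4p_m(m-1)+2$; since their local homology lies above the range in question this does not affect the computation, but it is worth saying explicitly.
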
	
%%%%%%%%%%%%%%%%%%%%%%%%%%%%%%%%%%%%%%%%%%%%%%%%%%%%%%%%%%%%%%%%
\begin{proof}
	We conclude from \cite[Sec.2]{Ra89}
	and \cite[Def.1.6]{Ra89}
	or \cite[Sec.2]{Ra10}:\\
	Let $v_j(c^r)=b_j(\ola^{rL},
	\ola^{(r-1)L}\,;\,
	\mathbb{Q}).$
	If $l(c^r)=rl(c)\le \lm$ we conclude from
	Assumption~\ref{assumption}
	for all $j\le 4\prm(m-1)+2:$
	$v_j(c^r)  \in \{0,1\}$ with $v_j(c^r)=1$ if and only if
	$j=\ind(c^r)$ and $r$ is odd or $\ind(c^2)\equiv \ind(c)\pmod{2}.$
	It follows that for $1\le r\le \lm/L:$
	\beq
	\label{eq:fet}
	v_j(c^r)=1 \Rightarrow j=\ind(c^r)\equiv \ind(c)\pmod{2}\,.
	\eeq
	%%%%%%%%%%%%%%%%%%%%%%%%%%
	The \emph{Morse inequalities} for the functional $F$ on the space
	$\ola^{\leins}=\ola^{\leins}\sn$ resp.
	$\ola^{\lm}=\ola^{\lm}\sn$ give a relation between the number of
	(homologically visible) critical points 
	$v_j,$ resp.
	$v_j^*$ with index $j$ and
	length 
	$l\le \leins,$ resp. $\le \lm$ with the Betti numbers 
	$\overline{\beta}_j,$ resp. $\overline{\beta}^*_j.$
	We obtain:
	%%%%%%%%%%%%%%%%%%%%%%%%%%%%%%%%%%%%%%%%%%%%%%%%%%%%%
	\beqn
	%\label{eq:morseR1}
	v_j=\overline{\beta}_j +q_j+q_{j-1}
		%%%%%%%%%%%%%%%%%%%%%%%%%%%%%%%%%%%%%%%%%%%%%%%%%%%%%%%%%
	\, \mbox{ resp. } \,
	%%%%%%%%%%%%%%%%%%%%%%%%%%%%%%%%%%%%%%%%%%%%%%%%%%%%%%%%%%
		%\label{eq:morseR}
	v^*_j=\overline{\beta}_j^* +
	q_j^*+q_{j-1}^*
	\eeqn
	%%%%%%%%%%%%%%%%%%%%%%%%%%%%%%%%%%%%%%%%%%%%%%%%%%%%%%%%%
	for a non-negative sequence 
	$q_j, j\ge 0,$ resp. $q_j^*, j\ge 0,$
	cf.~\cite[Sec.2]{Ra89}. 
	Equation~\eqref{eq:fet} implies the following for
	all $j \le 4\prm (m-1)+2,  j\equiv 1\pmod{2}$
	%%%%%%%%%%%%%%%%%%%%%%%%%%%%%%%%%%%%%%%%%%%%%%%%%%%%%%%%%
	\begin{equation}
	\label{eq:eq:vjzero}	
	v_j=v^*_j=0	
	\end{equation}
	%%%%%%%%%%%%%%%%%%%%%%%%%%%%%%%%%%%%%%%%%%%%%%%%%%%%%%%%%%
	and 
	$q_j=q^*_j=0$ for all $j.$ 
	 Here we have used that under
	the assumptions of the Lemma there is up to geometric
	equivalence only one closed geodesic of length $\le 2\pi \prm D^3,$
	and that an iterate $c^r$ can have non-trivial local
	homology in degree $j$ 
	only for even $j,$
	cf. Equation~\eqref{eq:fet}. 
	Hence	
	%%%%%%%%%%%%%%%%%%%%
		\begin{equation}
			\label{eq:morse2}
		v_j=\overline{\beta}_j;v_j^*=\overline{\beta}_j^*
	\end{equation}
for all $j \le 4 p_m(m-1)+2.$
\end{proof}
%%%%%%%%%%%%%%%%%%%%%%%%%%%%%%%%%%%%%%%%%%%%%%%%%%%%%%%%%%%%%
%%%%%%%%%%%%%%%%%%%%%%%%%%%%%%%%%%%%%%%%%%%%%%%%%%%%%%%%%%%%%%%
For a topological pair $(X,A)$ with singular
homology $H_j(X,A;\z)$
with integer coefficients let $\textrm{Tor}_j\subset H_j(X,A)$
be the torsion submodule. We denote by 
$FH_j(X,A;\z)=H_j(X,A;\z)/\textrm{Tor}_j$ the
associated free module. Then
$H_j(X,A;\q)\cong H_j(X,A;\z)\otimes \q\cong
FH_j(X,A;\z)\otimes \q.$
%%%%%%%%%%%%%%%%%%%%%%%%%%%%%%%%%%%%%%%%%%%%%%%%%%%%%%%%%%%%%%
\begin{lemma}
	\label{lem:two}
	If the Finsler metric $f$ on $\sm$ satisfies
	Assumption~\ref{assumption} and $p=\prm$
	then the homomorphism 
	%%%%%%%%%%%%%%%%%%%%%%%%%%%%%%%%%%%%%%%%%%%%%%%%%%%%%%%%%
	\beqn
	%\label{eq:inclusion1}
	H_j(\ola^{2 \pi  p
		\dil},
	\ola^0;\mathbb{Q})
	\longrightarrow 
	H_j(\ola,
	\ola^0;\mathbb{Q})
	\eeqn
	%%%%%%%%%%%%%%%%%%%%%%%%%%%%%%%%%%%%%%%%%%%%%%%%%%%%%%%%%
	induced by the inclusion is an 
	isomorphism for all $j\le 4p(m-1)+2.$
	Using the notation from Lemma~\ref{lem:morse} we obtain
	for the Betti numbers 
	$\overline{b}_j:=\dim H_j(\ola , \ola^0,\q)$
	for all $j\le 4p (m-1)+2:$
	\begin{equation}
	\overline{\beta}_j=
	\overline{b}_j\,.
	\end{equation}
\end{lemma}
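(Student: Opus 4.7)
My plan is to establish Lemma~\ref{lem:two} by a sandwich argument built on the distortion. From inequality~\eqref{eq:dil} I have the nested chain
\[
\ola_0^{2\pi p}\;\subset\;\ola^{L_1}\;\subset\;\ola_0^{2\pi p D^2}\;\subset\;\ola^{L_3}\;\subset\;\ola_0^{2\pi p D^4},
\]
and the isomorphism recorded just after Equation~\eqref{eq:homlasone} says that the inclusion $\ola_0^{2\pi k}\hookrightarrow\ola$ induces an isomorphism on $H_j(\cdot,\ola^0;\q)$ whenever $j<(4k+2)(m-1)$. For $j\le 4p(m-1)+2$ this applies with $k=p$ and, since $\lfloor pD^2\rfloor\ge p$, also with $k=\lfloor pD^2\rfloor$, identifying both $H_j(\ola_0^{2\pi p},\ola^0;\q)$ and $H_j(\ola_0^{2\pi pD^2},\ola^0;\q)$ with $H_j(\ola,\ola^0;\q)\cong\q^{\overline b_j}$ throughout the required index range (the edge case $m=2$, $j=4p+2$ can be handled by passing to $k=p+1$ in the upper comparison).

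Surjectivity then comes essentially for free: the composition
\[
H_j(\ola_0^{2\pi p},\ola^0)\xrightarrow{\alpha}H_j(\ola^{L_1},\ola^0)\xrightarrow{\beta}H_j(\ola_0^{2\pi pD^2},\ola^0)
\]
coincides with the map induced by the inclusion $\ola_0^{2\pi p}\subset\ola_0^{2\pi pD^2}$ and is therefore an isomorphism. So $\alpha$ is injective, $\beta$ is surjective, and $H_j(\ola^{L_1},\ola^0;\q)\cong\q^{\overline b_j}\oplus\ker\beta$, which already yields $\overline\beta_j\ge\overline b_j$. Proving equality, equivalently $\ker\beta=0$, is then the content of the remaining work.

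For that injectivity I would extend the sandwich one level further and run the analogous argument at $L_3$. This produces an injection $\gamma\colon H_j(\ola_0^{2\pi pD^2},\ola^0)\hookrightarrow H_j(\ola^{L_3},\ola^0)$ whose image has dimension $\overline b_j$. Simultaneously, Lemma~\ref{lem:morse} applied at level $L_3$ supplies $\overline\beta_j^*=v_j^*$ together with the vanishing $q_j^*=0$ of all Morse-theoretic defect terms in the relevant degrees, so the $F$-filtration of $(\ola^{L_3},\ola^0)$ is perfect there. An element of $\ker\beta$ would have to be killed in $H_j(\ola^{L_3},\ola^0)$ by a class in $H_{j+1}(\ola^{L_3},\ola^{L_1})$ coming from a critical orbit of $F$ with length in $(L_1,L_3]$; by Assumption~\ref{assumption} this orbit must be an iterate of $c$, and the parity consequence~\eqref{eq:fet} combined with $q_j^*=0$ should force the putative cancellation not to occur.

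The main obstacle I foresee is precisely this last step, ruling out any Morse cancellation in the passage from $\ola^{L_1}$ to $\ola^{L_3}$ in the range $j\le 4p(m-1)+2$. Executing it cleanly amounts to a diagram chase on the long exact sequences of the triples $(\ola^{L_3},\ola^{L_1},\ola^0)$ and $(\ola_0^{2\pi pD^2},\ola^{L_1},\ola^0)$, leveraging the perfect-filtration conclusion of Lemma~\ref{lem:morse} in each of the two chains to eliminate contributions of the connecting homomorphism. Once this is in hand, $\beta$ is injective, $\overline\beta_j=\overline b_j$, and the inclusion $\ola^{L_1}\hookrightarrow\ola$ induces the required isomorphism on $H_j$ for all $j\le 4p(m-1)+2$.
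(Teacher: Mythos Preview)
Your approach is essentially the paper's: the same distortion sandwich $\ola_0^{2\pi p}\subset\ola^{L_1}\subset\ola_0^{2\pi pD^2}\subset\ola^{L_3}$ gives surjectivity of $H_j(\ola^{L_1},\ola^0;\q)\to H_j(\ola,\ola^0;\q)$, and injectivity is obtained via the long exact sequence of the triple $(\ola^{L_3},\ola^{L_1},\ola^0)$ together with the parity consequence of Lemma~\ref{lem:morse}.

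The step you flag as the main obstacle is handled in the paper more directly than the diagram chase you outline. If $0\neq Z\in H_j(\ola^{L_1},\ola^0;\q)$ dies in $H_j(\ola,\ola^0;\q)$, then (since $i_{4*}\circ i_{3*}$ is an isomorphism) it already dies in $H_j(\ola^{L_3},\ola^0;\q)$, hence $Z=\partial_*Y$ for some $Y\in H_{j+1}(\ola^{L_3},\ola^{L_1};\q)$. But $Z\neq 0$ forces $j$ even by Lemma~\ref{lem:morse}, so $j+1$ is odd; and every critical orbit $S^1\cdot c^r$ with $L_1<rL\le L_3$ contributes local $S^1$-homology only in the single even degree $\ind(c^r)$ (again Lemma~\ref{lem:morse}, via Equation~\eqref{eq:fet}), so $H_{j+1}(\ola^{L_3},\ola^{L_1};\q)=0$. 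No separate ``perfect filtration'' bookkeeping is needed beyond this parity observation.
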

%%%%%%%%%%%%%%%%%%%%%%%%%%%%%
%%%%%%%%%%%%%%%%%%%%%%%%%%%%%
\begin{proof}
From the definition of the distortion 
given in Equation~\eqref{eq:dil} we
obtain the following inclusions:
\begin{equation}
	\label{eq:D}
\Lambda_0^{2\pi p}\subset
\Lambda^{2\pi p D}\subset
\Lambda_0^{2\pi p D^2}\subset
\Lambda^{2\pi p D^3}	
\end{equation}
and
\begin{equation*}
\ola_0^{2\pi p}\subset
\ola^{2\pi p D}\subset
\ola_0^{2\pi p D^2}\subset
\ola^{2\pi p D^3}	\,.
\end{equation*}
%%%%%%%%%%%%%%%%%%%%%%%%%%%%%%%%%%%%%%%%%%%%%%%%%%%%%%%%%%%
It follows that the composition
%%%%%%%%%%%%%%%%%%%%%%%%%%%%%%%%%%%%%%%%%%%%%%%%%%%%%
\begin{equation}
	\label{eq:hom_inclusion}
H_j(\ola^{2\pi p}_0,
\ola^0;\q)
\longrightarrow
H_j(
\ola^{2\pi pD},\ola^0;\q
)
\longrightarrow
H_j
(
\ola_0^{2\pi p D^2},\ola^0;\q
)\cong  
H_j 
( 
\ola ,\ola^0;\q
)
\end{equation}
%%%%%%%%%%%%%%%%%%%%%%%%%%%%%%%%%%%%%%%%%%%%%%%%%%%%%%%%%%%
is an isomorphism for $j\le 4 p(m-1)+2,$
cf. Equation~\eqref{eq:homlasone} and the arguments below.
Therefore we conclude that the homomorphism
%%%%%%%%%%%%%%%%%%%%%%%%%%%%%%%%%%%%%%%%%%%%%%%%%%%%%%%%%%%
	\beq
	\label{eq:LambdaR}
	i_{1*}:
	H_j(\ola^{2\pi p D}, 
	\ola^0; \mathbb{Q}) 
	\longrightarrow 
	H_j(\ola,
	\ola^0 ; \mathbb{Q}) 
	\eeq
%%%%%%%%%%%%%%%%%%%%%%%%%%%%%%%%%%%%%%%%%%%%%%%%%%%%%%%%%%%%%%
	induced by the inclusion is surjective for 
	$j \le 4p(m-1)+2$ since
	$4p(m-1)+2 <i(p+1)=(4p+2)(m-1).$
	From Assumption~\ref{assumption} and
Lemma~\ref{lem:morse} we conclude
%%%%%%%%%%%%%%%%%%%%%%%%%%%%%%%%%%%%%%%%%%%%%%%%%%%%%%%%%%%%%%
	\beq
	\label{eq:even}
	H_j(\ola^{ 2\pi p D}, 
	\ola^0; \mathbb{Q})=
	H_j(\ola^{ 2\pi p D^3}, 
	\ola^0; \mathbb{Q}) =0
	\eeq
%%%%%%%%%%%%%%%%%%%%%%%%%%%%%%%%%%%%%%%%%%%%%%%%%%%%%%%%%
	for all odd $j\le 4p(m-1)+2.$
	
	If the homomorphism given in
	Equation~\eqref{eq:LambdaR} is not injective for some
	$j\le 4p(m-1)+2$ then there is a non-trivial class
%%%%%%%%%%%%%%%%%%%%%%%%%%%%%%%%%%%%%%%%%%%%%%%%%%%%%%%%%
	\beqn
	Z \in H_j(\ola^{ 2\pi p D}S^n, 
	\ola^0 S^n; \mathbb{Q}) 
	\eeqn
	%%%%%%%%%%%%%%%%%%%%%%%%%%%%%%%%%%%%%%%%%%%%%%%%
	with $\deg(Z)=j\le 4p(m-1)+2$ such that
	$i_{1*}(Z)=0.$
	%%%%%%%%%%%%%%%%%%%%%%%%%%%%%%%%%%%%%%%%%%%
	%%%%%%%%%%%%%%%%%%%%%%%%%%%%%%%%%%%%%%%%%%%%
	%%%%%%%%%Ergaenzen!
	%%%%%%%%%%%%%%%%%%%%%%%%%%%%%%%%%%%%%%%%%%%
	%%%%%%%%%%%%%%%%%%%%%%%%%%%%%%%%%%%%%%%%%%%
	We consider the homomorphisms induced by
	the respective inclusions
	\begin{eqnarray*}
	\label{eq:inclusion1}	
	i_{2*}:
	H_j(\ola^{2\pi p D},\ola^0;\q) &\longrightarrow&
	H_j(\ola_0^{2\pi p D^2},\ola^0;\q)\\
	\label{eq:inclusion2}	
	i_{3*}:
	H_j(\ola^{2\pi p D^2}_0,\ola^0;\q) & \longrightarrow &
	H_j(\ola^{2\pi p D^3},\ola^0;\q)\\
	\label{eq:inclusion3}	
	i_{4*}:
	H_j(\ola^{2\pi p D^3},\ola^0;\q) &\longrightarrow &
	H_j(\ola,\ola^0;\q)\,.
\end{eqnarray*}
Then $i_{1*}=i_{4*}\circ i_{3*}\circ i_{2*}.$
Since the homomorphism
\begin{equation}
	\label{eq:inclusion4}	
	i_{4*} \circ i_{3*}:
	H_j(\ola^{2\pi D^2}_0,\ola^0;\q) \longrightarrow
	H_j(\ola,\ola^0;\q)
\end{equation}
is an isomorphism for all $j\le 4p(m-1)+2,$
cf. Equation~\eqref{eq:hom_inclusion}, we conclude
that $Z$ lies in the kernel of the homomorphism
\begin{equation}
	i_{3*}\circ i_{2*}:
H_j(\ola^{2\pi p D},\ola^0;\q) \longrightarrow
H_j(\ola^{2\pi p D^3},\ola^0;\q)\,,	
\end{equation}
	%%%%%%%%%%%%%%%%%%%%%%%%%%%%%%%%%%%%%%%%%%%%%%%%%
i.e. $(i_3\circ i_2)_*(Z)=0.$
The exactness of the 
	long homology sequence of
	the triple 
		$(\ola^{ 2\pi p D^3}S^n, 
	\ola^{2\pi p D}S^n, \ola^0 \sn)$
	implies that there exists
a non-trivial class 
	$$
	Y \in H_{j+1}(\ola^{ 2\pi p D^3}S^n, 
	\ola^{ 2\pi p D}S^n; \mathbb{Q}) 
	$$
	with $\partial_* Y=Z.$ 
	Here $\partial_*$ is the boundary operator of
	the long homology sequence of the triple.
	But since $j$ is even this leads to a contradiction
	to Equation~\eqref{eq:even}. 
\end{proof}

Let $\len=F(c)=l(c)$ be the length of the prime closed geodesic $c.$
Then we obtain for the Betti numbers
$b_j(c^r)= \rk H_j(\Lambda^{rL}, \Lambda^{(r-1)L} ;\z
)$
of the critical group of $c^r, r\le \lm/L:$
%%%%%%%%%%%%%%%%%%%%%%%%%%%%%%%%%%%%%%%%%%%%%%%%%%%%%%%%%%%%%%%%%%%%%%%%%
\beqn
%\label{eq:bkr}
b_k\left(c^r\right)=
\left\{
\begin{array}{ccc}
	1&;& k=\ind(c^r), r \mbox{ odd, or } \gamma_c=1\\
	1&;& k=\ind(c^r)+1, r \mbox{ odd, or } \gamma_c=1\\
	0&;& \mbox{ otherwise }
\end{array}
\right.\,.
\eeqn
%%%%%%%%%%%%%%%%%%%%%%%%%%%%%%%%%%%%%%%%%%
The Betti numbers
$\overline{b}_k(c^r)=\rk H_k(\ola^{rL}, 
\ola^{(r-1)L};\z)
=\dim H_k(\ola^{rL},
\ola^{(r-1)L};\q)$ of the 
$\sone$-critical group of $c^r$
for $r\le \lm/L:$
%%%%%%%%%%%%%%%%%%%%%%%%%%%%%%%%%%%%%%%%%%%%%%%%%%%%%%%%%%%%%%%%%%%%%
\beqn
%\label{eq:bkoverline}
\overline{b}_k(c^r)=
\left\{
\begin{array}{cclcc}
	1 & ; & k=\ind(c^r)&,& \gamma_c= 1 \mbox{ or } r \mbox{ odd }\\
	0 & ; &\mbox{ otherwise }&&
\end{array}
\right.
\,.
\eeqn
%%%%%%%%%%%%%%%%%%%%%%%%%%%%%%%%%%%%%%%%%%%%%%%%%%%%%%%%%%%%%%%%%%%%%%%
The Betti numbers 
$b_k=\rk H_k(\Lambda S^n, 
\Lambda^0 S^n;\z)=\dim
H_k(\Lambda S^n, 
\Lambda^0 S^n;\q)
$ are given by
%%%%%%%%%%%%%%%%%%%%%%%%%%%%%%%%%%%%%%%%%%%%%%%%%%%%%%%%%%%%%%%%%%%%%%%%
\beqn
b_k=
\left\{
\begin{array}{ccl}
	1 &;& k=2s(m-1), s\ge 1\\
	1 &;& k=2s(m-1)+1, s\ge 2\\
	0 &;& k \mbox{ otherwise }
\end{array}
\right.\,,
\eeqn
%%%%%%%%%%%%%%%%%%%%%%%%%%%%%%%%%%%%%%%%%%%%%%%%%%%%%%%%%%%%%%%%%%%%%%
cf. Equation~\eqref{eq:hlambdasm}.
The Betti numbers 
$\overline{b}_k=\rk H_k(\ola \sm, 
\ola^0 \sm;\z)$

$=
\dim
H_k(\ola \sm, 
\ola^0 \sm;\q)
$
of the 
$\sone-$quotient space are as follows:
%%%%%%%%%%%%%%%%%%%%%%%%%%%%%%%%%%%%%%%%%%%%%%%%%%%%%%%%%%%%%%%%%%%%%%%%
\beq
\label{eq:BettiSone}
\overline{b}_k=
\left\{
\begin{array}{ccl}
	2 &;& k=2s (m-1), s\ge 2\\
	1 &;& k \ge 2m-2, k \mbox{ even}, k \not= 2s(m-1), s\ge 2\\
	0 &;& k \mbox{ otherwise }
\end{array}
\right.\,,
\eeq
%%%%%%%%%%%%%%%%%%%%%%%%%%%%%%%%%%%%%%%%%%%%%%%%%%%%%%%%%%%%%%%%%%%%%%%%%
cf. Equation~\eqref{eq:homlasone}.

%%%%%%%%%%%%%
Bott's formula for the sequence $(\ind(c^r))_{r\ge 1}$
of indices of the iterates $c^r$ implies (cf. for example~\cite{Ra10}):
%%%%%%%%%%%%%%%%%%%%%%%%%%%%%%%%%%%%%%%%%%%%%%%%%%%%%%%%%%%%%%%%%%%%%
\beq
\label{eq:bott}
\ind(c^r)\ge \ind(c), r\ge 1\,.
\eeq
%%%%%%%%%%%%%%%%%%%%%%%%%%%%%%%%%%%%%%%%%%%%%%%%%%%%%%%%%%%%%%%%%%%%%%%%%%%%
Lemma~\ref{lem:morse}, Equation~\eqref{eq:bott} and
Equation~\eqref{eq:BettiSone} imply
that $\ind(c)=n-1$ and that the sequence $\ind(c^r)$
is monotone increasing, i.e. for all $r\ge 1:$
%%%%%%%%%%%%%%%%%%%%%%%%%%%%%%%%%%%%%%%%%%%%%%%%%%%%%%%%%%%%%%%%%%%%%%%%%%%
\beq
\label{eq:monotone}
\ind\left(c^{r+1}\right)\ge \ind\left(c^{r}\right)\,,
\eeq
cf. \cite{Ra17} or \cite{Ra10}.
%%%%%%%%%%%%%%%%%%%%%%%%%%%%%%%%%%%%%%%%%%%%%%%%%%%%%%%%%%%%%%%%%%%%%%%%%%
Bott's formula 
implies that
$v_j>0$ 
resp. $v_j^*>0$
for $j \le 4p(m-1)+2$ holds only for even $j.$
Since 
%%%%%%%%%%%%%%%%%%%%%%%%%%%%%%%%%%%%%%%%%%%%%%%%%%%%%%%%
\beq
\label{eq:morse-odd}
v_j=v_j^*=\overline{\beta}_j=\overline{\beta}_j^*=0
\eeq
%%%%%%%%%%%%%%%%%%%%%%%%%%%%%%%%%%%%%%%%%%%%%%%%%%%%%%%%%%%%%%
for all odd $j\le 4p(m-1)+2$ the Morse inequalities
take
the simple form for all $j \le 4p(m-1)+2,$
cf. Equation~\eqref{eq:morse2}:
%%%%%%%%%%%%%%%%%%%%%%%%%%%%%%%%%%%%%%%%%%%%%%%%%%%%%%%%%%%%%%
\beq
\label{eq:morse}
v_j=\overline{\beta}_j=\overline{b}_j\,;\,v_j^*=\overline{\beta}_j^*\,.
\eeq
%%%%%%%%%%%%%%%%%%%%%%%%%%%%%%%%%%%%%%%%%%%%%%%%%%%%%%%%%%%%%%%%%%%%%%
If $\gamma_c=1/2,$ i.e.
if $\ind c^2=2m-1=\ind c +1,$ we obtain from
Bott's formula for $\ind(c^r)$ that the
sequence
$\ind(c^r), r\ge 1$ is strictly monotone increasing.
But  $\overline{b}_{4m-4}=2,$
cf. Equation~\eqref{eq:BettiSone}.
This contradicts 
Equation~\eqref{eq:morse}.
Hence $\gamma_c=1,$ resp. $\ind(c^2)=2m.$
The 
sequence $\ind(c^r)_{1\le r\le L_1/L}$ is uniquely determined by
Equation~\eqref{eq:monotone} and Equation~\eqref{eq:morse}: 
%%%%%%%%%%%%%%%%%%%%%%%%%%%%%%%%%%%%%%%%%%%%%%%%%%%%%%%%%%%%%%%%%%%%%%
\begin{eqnarray*}
	\left(\ind(c^r)\right)_{r\ge 1}=&
		\\ 
&	\left(2m-2, 2m, 2m+2,\ldots, 4m-6, 4m-4, 4m-4, 4m-2,\ldots\right.\\
%	\nonumber
&	\left.
	\ldots, 6m-8, 6m-6, 6m-6, 6m-4,\ldots\right)
\end{eqnarray*}
%%%%%%%%%%%%%%%%%%%%%%%%%%%%%%%%%%%%%%%%%%%%%%%%%%%%%%%%%%%%%%%%%%%%%%%
%%%%%%%%%%%%%%%%%%%%%%%%%%%%%%%%%%%%%%%%%%%%%%%%%%%%%%%%%%%%%%%%%%
%%%%%%%%%%%%%%%%%%%%%%%%%%%%%%%%%%%%%%%%%%%%%%%%%%%%%%%%%%%%%%%%%%%%%%%%%
From Lemma~\ref{lem:two} and Equation~\eqref{eq:BettiSone} we conclude
$$
\overline{\beta}_{4p(m-1)}=
\dim H_{4p(m-1)}(
\ola^{2\pi p D},\ola^0;\q
)=
\overline{b}_{4p(m-1)}=2\,.
$$
Therefore 
we obtain the following,
cf. \cite[Eq.(13)]{Ra17}:
%%%%%%%%%%%%%%%%%%%%%%%%%%%%%%%%%%%%%%%%%%%%%%%%%%%%%%%%%%%%%%
\begin{lemma}
	\label{lem:3.6}
	If Assumption~\ref{assumption} holds then for 
	$p=\prm$ we have
%%%%%%%%%%%%%%%%%%%%%%%%%%%%%%%%%%%%%%%%%%%%%%%%%%%%%%%%%%%%%%
	\beq
	\label{eq:ckm}
	\ind\left(c^{(2p-1)m}\right)=
	\ind\left(c^{(2p-1)m+1}\right)=4p(m-1)\,.
	\eeq
	%%%%%%%%%%%%%%%%%%%%%%%%%%%%%%%%%%%%%%%%%%%%%%%%%%%%%%%%%%%%
	and $L(c^{(2p-1)m+1})=((2p-1)m+1)L\le 2\pi p D.$
\end{lemma}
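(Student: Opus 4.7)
The plan is to derive the lemma from the Morse equality $v_j=\overline{b}_j$ for even $j\le 4p(m-1)+2$ (with $v_j=0$ for odd $j$), established in Lemmas~\ref{lem:morse} and~\ref{lem:two}, combined with monotonicity~\eqref{eq:monotone} and the Betti number formula~\eqref{eq:BettiSone}.

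First I would induct on $k\in\{1,\ldots,2p-1\}$ to show that the $k$-th plateau of the sequence $(\ind(c^r))$ sits at positions $r=km$ and $r=km+1$ with value $2(k+1)(m-1)$, and that $\ind(c^r)$ increases by exactly $2$ at every intermediate $r$. The base case is the repetition $\ind(c^m)=\ind(c^{m+1})=4(m-1)$ already visible in the explicit listing of $(\ind(c^r))$ preceding the lemma. For the inductive step, note that $\overline{b}_j=1$ for all even $j\in(2k(m-1),2(k+1)(m-1))$ while $\overline{b}_{2(k+1)(m-1)}=2$, so by the Morse equality and monotonicity the strictly increasing stretch between two consecutive plateaus must take exactly $m-1$ steps of size $2$, and the next doubled value appears at $r=km$ and $r=km+1$. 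Specializing to $k=2p-1$ gives
\[
\ind\bigl(c^{(2p-1)m}\bigr) \;=\; \ind\bigl(c^{(2p-1)m+1}\bigr) \;=\; 4p(m-1).
\]

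Second I would establish the length bound by summing Betti numbers:
\[
\sum_{\substack{j\text{ even}\\ 2(m-1)\le j\le 4p(m-1)}}\overline{b}_j
\;=\; \bigl((2p-1)m-2(p-1)\bigr)+(2p-1)
\;=\; (2p-1)m+1,
\]
where the first summand counts the even integers in the range and the second accounts for the $2p-1$ doubled values at $j=2s(m-1),\,s=2,\ldots,2p$. Since $v_j$ counts only iterates with $r\le L_1/L$, the Morse equality forces
\[
\lfloor L_1/L\rfloor \;\ge\; \sum v_j \;=\; (2p-1)m+1,
\]
so $L\bigl(c^{(2p-1)m+1}\bigr)=((2p-1)m+1)L\le L_1=2\pi pD$, as claimed.

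The main obstacle is to weave the length verification into the induction cleanly, because the Morse equality at step $k$ may only be invoked after one knows $(km+1)L\le L_1$. The same partial-sum calculation applied up to $2(k+1)(m-1)$ in place of $4p(m-1)$ supplies $\lfloor L_1/L\rfloor\ge km+1$ at each stage, so the real difficulty is purely in the ordering of the bookkeeping; no new geometric input is needed beyond the Morse equalities and Betti numbers already in hand.
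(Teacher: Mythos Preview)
Your proposal is correct and follows essentially the same line as the paper: both arguments rest on the Morse equalities $v_j=\overline b_j$ for even $j\le 4p(m-1)+2$ from Lemmas~\ref{lem:morse} and~\ref{lem:two}, together with monotonicity~\eqref{eq:monotone} and the Betti number pattern~\eqref{eq:BettiSone}, to pin down the sequence $(\ind(c^r))$ and locate the plateau at $r=(2p-1)m,(2p-1)m+1$. The paper simply writes the sequence out and cites~\cite[Eq.(13)]{Ra17}, whereas you make the induction and the counting for the length bound explicit; your bookkeeping concern about the order of the induction versus the bound $((2p-1)m+1)L\le L_1$ is legitimate but, as you note, is resolved by the partial-sum argument and requires no additional input.
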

%%%%%%%%%%%%%%%%%%%%%%%%%%%%%%%%%%%%%%%%%%%%%%%%%%%%%%%%%%%%%%%%%%%%%
%%%%%%%%%%%%%%%%%%%%%%%%%%%%%%%%%%%%%%%%%%%%%%%%%%%%%%%%%%%%%%%%%%%%%
%%%%%%%%%%%%%%%%%%%%%%%%%%%%%%%%%%%%%%%%%%%%%%%%%%%%%%%%%%%%%%%%%%%%%
\begin{lemma}	
	\label{lem:hlambda_iso}
If Assumption~\ref{assumption} holds, $p=\prm$ then 
for all $j \le 4p (m-1)+1:$
%%%%%%%%%%%%%%%%%%%%%%%%%%%%%%%%%%%%%%%%%%%%%%%%%%%%%%%%%%%%%%%%%%%%%%%%%%%%%%%%%%%%%%
\beq
\label{eq:holaD}
H_j ( 
\la^{2\pi p D^3}, \la^{2\pi p D};\q
)
=0\,;\,
H_j ( 
\ola^{2\pi p D^3}, \ola^{2\pi p D};\q
)
=0\,,
\eeq
%%%%%%%%%%%%%%%%%%%%%%%%%%%%%%%%%%%%%%%%%%%%%%%%%
and the homomorphism 
%%%%%%%%%%%%%%%%%%%%%%%%%%%%%%%%%%%%%%%%%%%%%%%%%%
\beq
\label{eq:lambda-iso}
H_j ( 
\la^{2\pi p D}, \la^{0};\q
)
\longrightarrow 
H_j ( 
\la , \la^0,\q
)
\eeq
%%%%%%%%%%%%%%%%%%%%%%%%%%%%%%%%%%%%%%%%%%%%%%
induced by the inclusion is an isomorphism
for all $j\le 4p(m-1).$
\end{lemma}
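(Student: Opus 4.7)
The plan is to establish the two vanishing statements in \eqref{eq:holaD} by a direct Morse-theoretic argument on the finite number of critical orbits in the energy window $(L_1,L_3]=(2\pi pD,2\pi pD^3]$, and then to derive the isomorphism \eqref{eq:lambda-iso} from the long exact sequence of a triple combined with a short diagram chase along an inclusion chain.

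The essential step is to identify the indices of all critical orbits of $F$ whose value lies in $(L_1,L_3]$. By Assumption~\ref{assumption}, every such orbit is an iterate $c^r$ of the prime closed geodesic $c$. Lemma~\ref{lem:3.6} supplies two facts I will exploit: $\ind(c^{(2p-1)m+1})=4p(m-1)$ and $((2p-1)m+1)\len\le L_1$. The latter forces any iterate $c^r$ with $r\len>L_1$ to satisfy $r\ge (2p-1)m+2$. The explicit sequence $(\ind(c^r))_{r\ge 1}$ determined just before Lemma~\ref{lem:3.6} attains the value $4p(m-1)$ exactly at the two consecutive positions $r=(2p-1)m,(2p-1)m+1$; together with the monotonicity~\eqref{eq:monotone} this yields $\ind(c^r)\ge 4p(m-1)+2$ for every $r\ge (2p-1)m+2$.

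Since the metric is non-degenerate on $\Lambda^{L_3}$, the relative homologies $H_\ast(\Lambda^{L_3},\Lambda^{L_1};\q)$ and $H_\ast(\overline{\Lambda}^{L_3},\overline{\Lambda}^{L_1};\q)$ split as direct sums of the local homology contributions of the finitely many orbits $c^r$ with $L_1<r\len\le L_3$. Because $\gamma_c=1$, each such $c^r$ contributes non-zero local homology in $\Lambda$ only in degrees $\ind(c^r)$ and $\ind(c^r)+1$, and in $\overline{\Lambda}$ only in degree $\ind(c^r)$. By the previous paragraph all these degrees are at least $4p(m-1)+2$, so both groups in \eqref{eq:holaD} vanish for $j\le 4p(m-1)+1$. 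The long exact sequence of the triple $(\Lambda^{L_3},\Lambda^{L_1},\Lambda^0)$ together with the first vanishing in \eqref{eq:holaD} then yields an inclusion-induced isomorphism $H_j(\Lambda^{L_1},\Lambda^0;\q)\cong H_j(\Lambda^{L_3},\Lambda^0;\q)$ for $j\le 4p(m-1)$.

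It remains to pass from $\Lambda^{L_3}$ to $\Lambda$. For this I will use the inclusion chain $\Lambda^{L_1}\subset\Lambda_0^{2\pi pD^2}\subset\Lambda^{L_3}\subset\Lambda$ together with the fact~\eqref{eq:hom-iso} that the inclusion $\Lambda_0^{2\pi k}\hookrightarrow\Lambda$ induces an isomorphism on $H_j(\cdot,\Lambda^0;\q)$ for every $j<(4k+2)(m-1)$; applied with $k=\lfloor pD^2\rfloor\ge p$, this covers the range $j\le 4p(m-1)$. In the chain the first and second inclusion have isomorphic composition by the previous paragraph, while the second and third have isomorphic composition by \eqref{eq:hom-iso}; a formal diagram chase then forces each of the three inclusion-induced maps to be an isomorphism for $j\le 4p(m-1)$, yielding \eqref{eq:lambda-iso}. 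The main delicate point is the index bookkeeping in the second paragraph: one must invoke the precise ``double-step'' pattern of $(\ind(c^r))$, a consequence of $\gamma_c=1$ and of the Morse inequalities on the $S^1$-quotient derived earlier, to rule out any index value $\le 4p(m-1)+1$ occurring after position $(2p-1)m+1$; without this a single late iterate with low index could obstruct the vanishing in \eqref{eq:holaD}.
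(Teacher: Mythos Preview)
Your proof is correct and follows essentially the same approach as the paper: you establish that every critical orbit $c^r$ with length in $(L_1,L_3]$ satisfies $r\ge (2p-1)m+2$ and hence $\ind(c^r)\ge 4p(m-1)+2$, deduce the vanishing~\eqref{eq:holaD} from Morse theory, and then obtain~\eqref{eq:lambda-iso} via the inclusion chain~\eqref{eq:D} combined with the perfectness of $F_0$. Your explicit diagram chase using~\eqref{eq:hom-iso} with $k=\lfloor pD^2\rfloor\ge p$ is exactly what the paper's terse appeal to~\eqref{eq:hlambdaglobal} encodes.
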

%%%%%%%%%%%%%%%%%%%%%%%%%%%%%%%%%%%%%%%%%%%%%%%%%%%%%%%%%%%%%%%%%%%%%
\begin{proof}
Since $\ind(c^r)\ge 4p(m-1)+2$
for
all $r\ge (2p-1)m+2$ it also follows that for
$j \le 4p(m-1)+1:$
$v_j=v_j^*,$ hence Equation~\eqref{eq:holaD}
follows, cf. Lemma~\ref{lem:3.6}.
The inclusion Equation~\eqref{eq:D} 
together with the isomorphism~\eqref{eq:hlambdaglobal}
imply that the homomorphism~\eqref{eq:lambda-iso}
is an isomorphism for $j \le 4p(m-1).$
\end{proof}
%%%%%%%%%%%%%%%%%%%%%%%%%%%%%%%%%%%%%%%%%%%%%%%%%%%%%%%%%%%%%%%%%%%
%%%%%%%%%%%%%%%%%%%%%%%%%%%%%%%%%%%%%%%%%%%%%%%%%%%%%%%%%%%%%%%%%
%%%%%%%%%%%%%%%%%%%%%%%%%%%%%%%%%%%%%%%%%%%%%%%%%%%%%%%%%%%%%%%%%
%%%%%%%%%%%%%%%%%%%%%%%%%%%%%%%%%%%%%%%%%%%%%%%%%%%%%%%%%%%%%%%%%
\section{Proof of Theorem~\ref{thm:length}}
%%%%%%%%%%%%%%%%%%%%%%%%%%%%%%%%%%%%%%%%%%%%%%%%%%%%%%%%%%%%%%%%
\label{sec:proofs}
%%%%%%%%%%%%%%%%%%%%%%%%%%%%%%%%%%%%%%%%%%%%%%%%%%%%%%%%%%%%%%%%%%%
	%%%%%%%%%%%%%%%%%%%%%%%%%%%%%%%%%
	%%%%%%%%%%%%%%%%%%%%%%%%%%%%%%%%
	In this proof we use as coefficient ring for homology the
	ring $\mathbb{Z}$ of integers if not otherwise stated.
	We assume that Assumption~\ref{assumption} holds and derive a contradiction.
	Let $p=\prm.$ Because of the Morse inequalities~\eqref{eq:morse} 
	and Lemma~\ref{lem:two}
	we obtain
	for $j \le 4 p(m-1)+2:$
	%%%%%%%%%%%%%%%%%%%%%%%%%%%%%%%%%%%%%%%%%%%%%%%%%%%%%%%%%%%%%%%%%%%%%%%%%%
	\beqn
	FH_j(\ola,\ola^0;\z)
	\cong
	FH_j(\ola^{2\pi D p},\ola^0
	)
	\cong 
	\bigoplus_{r L \le 2\pi \prm D} FH_j(\ola^{rL}, 
	\ola^{(r-1)L})\,.
	\eeqn
	%%%%%%%%%%%%%%%%%%%%%%%%%%%%%%%%%%%%%%%%%%%%%%%%%%%%%%%%%%%%%%%%%%%%%%%%%%%%
	We have shown that $\ind(c)=2m-2,\ind(c^2)=2m$
	and $\ind(c^{(2p-1)m})=\ind(c^{(2p-1)m+1})=4p(m-1),$
	cf. Equation~\eqref{eq:ckm}.
	It also follows that $((2p-1)m+1)L\le 2\pi p D.$
	Let
	$s_{(2p-1)m}, s_{(2p-1)m+1}$  denote generators
	of the local critical groups, cf. Equation~\eqref{eq:sr}.
	It follows that
	%%%%%%%%%%%%%%%%%%%%%%%%%%%%%%%%%%%%%%%%%%%%
	\begin{eqnarray*}
	H_{4p(m-1)}\left(
	\la^{((2p-1)m+1)L},\la^{((2p-1)m-1)L}	
	\right)&\\
	\cong
	H_{4p(m-1)}\left(
	\la^{((2p-1)m+1)L},\la^{(2p-1)mL}\right)
	&\oplus
	H_{4p(m-1)}\left(
	\la^{(2p-1)mL},\la^{((2p-1)m-1)L} \right)\\
	&\cong 
	\z\cdot s_{(2p-1)m+1}
	\oplus
	\z\cdot s_{(2p-1)m}\,.
	\end{eqnarray*}
%%%%%%%%%%%%%%%%%%%%%%%%%%%%%%%%%%%%%%%%%%%%%%%%%%%%%%%%	
	We consider the following commutative diagram,
	the vertical homomorphisms are induced by inclusions, the horizontal ones by
	the canonical projection with respect to
	the $S^1$-action:
	%%%%%%%%%%%%%%%%%%%%%%%%%%%%%%%%%%%%%%%%%%%%%%%
\begin{equation*}
\begin{xy}
\xymatrix{
	H_{4p(m-1)}(
	\Lambda^{((2p-1)m+1)L},
	\Lambda^{((2p-1)m-1)L}
	)\ar[r]^{\rho_{1*}}    &   
	FH_{4p(m-1)}\left(
	\ola^{((2p-1)m+1)L},
	\ola^{((2p-1)m-1)L}
	\right)	 \\
	H_{4p(m-1)}(
	\Lambda^{((2p-1)m+1)L},
	\Lambda^{0}
	)\ar[r]^{\rho_{2*}} 
	\ar[u]_{h_{1*}}\ar[d]_{\cong}^{h_{2*}}
	&   
	FH_{4p(m-1)}\left(
	\ola^{((2p-1)m+1)L},
	\ola^{0}
	\right)\ar[d]_{\cong}^{j_{2*}}
	\ar[u]_{\cong}^{j_{1*}}\\
	H_{4p(m-1)}(
	\Lambda,
	\Lambda^{0}
	)\ar[r]^{\rho_*} 
	&   
	FH_{4p(m-1)}(
	\ola,
	\ola^{0}
	)
}	 
\end{xy}
\end{equation*}
%%%%%%%%%%%%%%%%%%%%%%%%%%%%%%%%%%%%%%%%%%%%%%%%%%%%%%%%%%%%%%%%%%%
$j_{1*},j_{2*}$ are isomorphisms, which follows from the following 
arguments:
%%%%%%%%%%%%%%%%%%%%%%%%%%%%%%%%%%%%%%%%%%%%%
For $r\le (2p-1)m-1:
\ind(c^r)\le 4p(m-1)-2,$
hence
$H_j(\ola^{((2p-1)m-1)L},
\ola^0)=0$ for $j=4p(m-1), 4p(m-1)-1.$
Therefore $j_{1*}$ is an isomorphism.
The homomorphism
$j_{2*}$ is an isomorphism since
$FH_{j}(\ola,
\ola^{((2p-1)m+1)L})=0$
for $j=4p(m-1), 4p(m-1)+1,$
cf.
Equation~\eqref{eq:morse}
and Equation~\eqref{eq:morse-odd}.

%%%%%%%%%%%%%%%%%%%%%%%%%%%%%%%%%%%%%%%%%%%%%%%%%%%%%%%%%%
Lemma~\ref{lem:hlambda_iso} implies that
$h_{2*}$ is an isomorphism.

Since $H_{4p(m-1)}(\la^{((2p-1)m-1)L},\la^0)=0$
the map $h_{1*}$ is injective and the image
of a generator $a_{p}'\in
H_{4p(m-1)}(\la^{((2p-1)m+1)L},\la^0)
\cong \z$  is a prime element.
Let $a_p \in H_{4p(m-1)}\left(
\la,\la^0\right)$ be a generator.
Then we conclude from 
Lemma~\ref{lem:hom-projection}:
%%%%%%%%%%%%%%%%%%%%%%%%%%%%%%%%%%%%%%%%%%%%%
\begin{equation*}
	\rho_*(a_p)=p \tilde{a}_p	
\end{equation*}
%%%%%%%%%%%%%%%%%%%%%%%%%%%%%%%%%%%%%%%%%%%%
for a generator $\tilde{a}_p\in 
H_{4p(m-1)}(\ola,\ola^0).$

Define $a_p'=(h_{2*})^{-1}a_p$ and for
$\eta \in \{0,1\}:
s_{(2p-1)m+\eta}'=(j_{2*})^{-1}s_{(2p-1)m+\eta},
s_{(2p-1)m+\eta}''=(j_{1*})^{-1}s_{(2p-1)m+\eta}'.$ Then there
are coprime integers
$\alpha,\beta$
% % % % % % % % % % % % % % % % % % % % % % % %
\begin{equation*}
	h_{1*}(a_p')=\alpha s_{(2p-1)m+1}+
	\beta s_{(2p-1)m}.
\end{equation*}
% % % % % % % % % % % % % % % % % % % % % % % %
Here we also allow $\alpha=0,$ which
implies $\beta=\pm 1$ resp.
$\beta=0,$ which implies $\alpha=\pm 1.$
By Equation~\eqref{eq:psr} we have
%%%%%%%%%%%%%%%%%%%%%%%%%%%%%%%%%%%%%%%%%%%%%%%%%%%%%%%
\begin{equation*}
\rho_{1*}(s_{(2p-1)m+1})=
((2p-1)m+1) \overline{s}_{(2p-1)m+1}\,;\,
\rho_{1*}\left(s_{(2p-1)m}\right)=
(2p-1)m\overline{s}_{(2p-1)m}.	
\end{equation*}
%%%%%%%%%%%%%%%%%%%%%%%%%%%%%%%%%%%%%%%%%%%%%%%%%%%%%%%%%%%%%%%%%%%
Since
$\overline{s}_{(2p-1)m+1}'',
 \overline{s}_{(2p-1)m}''$ form a basis for
 $FH_{4p(m-1)}(\ola,\ola^0)$ there are integers
 $w,z \in \z$ with
 $$
\tilde{a}_p=
w \overline{s}_{(2p-1)m+1}''+z\overline{s}_{(2p-1)m}''\,.
$$
%%%%%%%%%%%%%%%%%%%%%%%%%%%%%%%%%%%%%%%%%%%%%%%%%%%%%%%%%%%%%%%%%%%
We obtain the following explicit description of
the last commutative diagram with respect to the
given basis elements
%%%%%%%%%%%%%%%%%%%%%%%%%%%%%%%%%%%%%%%%%%%%%%%
	\begin{equation*}
	\begin{xy}
	\xymatrix @C=8pc @R=6pc{
		\z s_{(2p-1)m+1}\oplus \z s_{(2p-1)m}
		\ar[r]_{\rho_{1*}}^{
			\tiny
			\left(
			\begin{array}{cc} (2p-1)m+1&0\\0&
			(2p-1)m
			\end{array}
			\right)}   &   
		\z \overline{s}_{(2p-1)m+1}\oplus 
		\z \overline{s}_{(2p-1)m}	 \\
		\z a'_p\ar[r]^{\rho_{2*}} 
		\ar[u]_{h_{1*}}^{\small
			\left(
			\begin{array}{cc} 
			\alpha, &\beta
			\end{array}
			\right)}
		\ar[d]_{\cong}^{h_{2*}}
		&   
		\z  \overline{s}'_{(2p-1)m+1}\oplus 
		\z \overline{s}'_{(2p-1)m}
		\ar[d]_{j_{2*}}^{
			\tiny
			\left(
			\begin{array}{cc} 1&0\\0&1
			\end{array}
			\right)}
		\ar[u]_{j_{1*}}^{
			\tiny
			\left(
			\begin{array}{cc} 1&0\\0&1
			\end{array}
			\right)}\\
		\z a_p
		\ar[r]^{\rho_*} 
		&   
		\z \overline{s}''_{(2p-1)m+1}\oplus 
		\z \overline{s}''_{(2p-1)m}
	}	 
	\end{xy}
	\end{equation*}	
%%%%%%%%%%%%%%%%%%%%%%%%%%%%%%%%%%%%%%%%%%%%%%%%%%%%%%%%%%%%%%%%%%%%
	We conclude from this diagram
	\begin{eqnarray*}
		\rho_*(a_p)&=& p \tilde{a}_p=
		p\left(w \overline{s}_{(2p-1)m+1}''+
		z \overline{s}_{(2p-1)m}''\right)\\
		&=&j_{2*}\rho_{2*}h_{2*}^{-1} (a_p)=
		j_{2*}\rho_{2*}(a_p')=
		j_{2*}j_{1*}^{-1}\rho_{1*}h_{1*}(a_p')\\
		&=&
		j_{2*}j_{1*}^{-1}\rho_{1*}
		\left(\alpha\cdot s_{(2p-1)m+1}+
		\beta \cdot s_{(2p-1)m}\right)\\
		&=&
		\alpha ((2p-1)m+1)\cdot \overline{s}_{(2p-1)m+1}''+
		\beta (2p-1)m\cdot 		\overline{s}_{(2p-1)m}''\,.	
		\end{eqnarray*}
	%%%%%%%%%%%%%%%%%%%%%%%%%%%%%%%%%%%%%%%%%%%%%%%%
	Since
	$\overline{s}_{(2p-1)m+1}'', \overline{s}_{(2p-1)m}''$
	form a basis we obtain:
	%%%%%%%%%%%%%%%%%%%%%%%%%%%%%%%%%%%%%%%%%%%%%%%
	\beqn
	p w= ((2p-1)m+1) \alpha \enspace;\enspace 
	p z= (2p-1)m \beta
	\eeqn
	%%%%%%%%%%%%%%%%%%%%%%%%%%%%%%%%%%%%%%%%%%%%%%%%
	which is equivalent to
	%%%%%%%%%%%%%%%%%%%%%%%%%%%%%%%%%%%%%%%%%%%%%%%%%%
	\beq
	\label{eq:rplus1}
	p (2m\alpha-w)=(m-1)\alpha
	\enspace;\enspace
	p (2m\beta-z)=m \beta.
	\eeq
%%%%%%%%%%%%%%%%%%%%%%%%%%%%%%%%%%%%%%%%%%%%%%%%%%%%%%%%%%%
	Equation~\eqref{eq:rplus1} implies that
	$p$ is a common divisor of the numbers $\alpha$ and $\beta$ since $p=\prm$ neither 
	divides $m$ nor $m-1,$ cf. Lemma~\ref{lem:pm}.
	
	But the numbers 	$\alpha,\beta$
	are by assumption coprime, hence we arrive at
	a contradiction.
	Note that this argument is also	valid for the cases
	$\alpha=0, \beta=\pm 1,$ resp. 	$\alpha=\pm 1, \beta=0.$
%%%%%%%%%%%%%%%%%%%%%%%%%%%%%%%%%%%%%%%%%%%%%%%%%%%%%%%%%%%%%%%%
%%%%%%%%%%%%%%%%%%%%%%%%%%%%%%%%%%%%%%%%%%%%%%%%%%%%%%%%%%%%%%%%
\section{Katok metrics}
%%%%%%%%%%%%%%%%%%%%%%%%%%%%%%%%%%%%%%%%%%%%%%%%%%%%%%%%%%%%%%%
\label{sec:katok}
%%%%%%%%%%%%%%%%%%%%%%%%%%%%%%%%%%%%%%%%%%%%%%%%%%%%%%%%%%%%%%%%
	Choose
	numbers $p_1<\ldots<p_m$ which are
	relatively prime and let $p=p_1\cdots p_m.$ 

	Let 
	\begin{equation*}
	R(\phi)	=
	\left(
	\begin{array}{cc}
	\cos  \phi &-\sin \phi\\
	\sin \phi &\cos \phi
\end{array}
	\right)	
	\end{equation*}
be the rotation in $\R^2$ with angle
$\phi.$ Let $\R^{2m}= V_1\oplus \ldots \oplus V_m$
be an orthogonal decomposition into
$2$-dimensional subspaces and let
$A(t)\in S\mathbb{O}(2m)$ be the 
$1$-parameter family of isometries
of $S^{2m-1}$ with
$A(t)|V_j=R(pt/p_j), j=1,\ldots,m.$
This one-parameter group of isometries
generates a Killing field $V$ on $S^{2m-1}$
with norm
$$
\|V\|
=a(p_1,\ldots,p_m)=
p\left(\sum_{j=1}^m p_j^{-2}\right)^{1/2}\,.
$$
For $\mu < a(p_1,\ldots,p_m)^{-1}$ we define the
Killing field $V_{\mu}=\mu V$
with $\|V_{\mu}\|<1.$
Then the sphere bundle
determined by
$\{\xi \in T_xS^n\,;\,
\|\xi -V_{\mu}(x)\|=1\}$
determines the unit sphere bundle of 
a non-reversible Finsler metric
$f_{\mu}.$ These metrics are called 
\emph{Katok metrics,}
cf. \cite[p.139]{Zi}
or \emph{Zermelo deformation} of
the standard metric~\cite{FM}. For the flow 
$\psi_t$ of the Killing field $V_{\mu}$
the geodesics of the Finsler metric $f_{\mu}$
are of the form $t \longmapsto \psi_t(c(t))$
for a great circle $c(t)$ on $S^n.$
For irrational $\mu$ the Katok metric
$f_{\mu}$ determined by the Killing field
$V_{\mu}$ has exactly $2m$ closed geodesics 
$c_j^{\pm},j=1,\ldots,m$ with
$c_j^{-}(t)=c_j^{+}(-t)$ for all 
$t,$ 
cf.~\cite[p.139]{Zi}
and \cite{FM}. 
These
 are the great circles invariant
under the flow $\psi_t.$ 
Since $\mu$ is irrational
these metrics are bumpy. 
As remarked in \cite{Ra04}, \cite{FM} these metrics
have constant flag curvature
$1.$ 
The lengths $l(c_j^{\pm}), j=1,\ldots,m$
of the closed geodesics $c_j^{\pm}$
are given
by
%%%%%%%%%%%%%%%%%%%%%%%%%%%%%%%%%%%%%%%%%%
\beqn
l(c_j^{\pm})=
\frac{2\pi}{1\pm \frac{p}{p_j}\mu}\,,
j=1,\ldots,m\,.
\eeqn
%%%%%%%%%%%%%%%%%%%%%%%%%%%%%%%%%%%%%%%%%%%%%%%%%%%%%
The distortion is given by:
%%%%%%%%%%%%%%%%%%%%%%%%%%%%%%%%%%%%%%%%%%%%%%%%%%
\beqn
D=\lambda=\frac{1}{1-\max\{\|V_{\mu}(x)\|, x\in \sn\}}=
\frac{1}{1-\mu a(p_1,\ldots,p_m)}\,.
\eeqn
%%%%%%%%%%%%%%%%%%%%%%%%%%%%%%%%%%%%%%%%%%%%%%%%%
If we choose $p_1=1$ then
for an arbitrary $N\in \n$ one can choose
$N<p_2<\ldots<p_m $ and an irrational $\mu$ satisfying
%%%%%%%%%%%%%%%%%%%%%%%%%%%%%%%%%%%%%%%%%
\beqn
\frac{1}{p}\frac{N-1}{N} <\mu 
<\frac{1}{a(1,p_2,\ldots,p_m)}\,.
\eeqn
%%%%%%%%%%%%%%%%%%%%%%%%%%%%%%%%%%%%%%%%%%
This implies that $l(c_1^-)\ge 2\pi N,
\ind(c_1^{-1})\ge 2N(m-1)$ and the distortion
satisfies $D\ge N.$ 
One can also show that
$l(c_j^+)<2\pi, \ind(c_j^+)\le 4(m-1), 1\le j\le m$
and for $2\le j\le m$ we obtain
$l(c_j^{-1})<2\pi N/(N-1),
\ind(c_j^{-1})\le 6(m-1).$

For $n=3,m=2$ we obtain 
for any $N$ a bumpy Katok metric $f_{\mu}$
with exactly
four closed geodesics $c_1^{\pm},c_2^{\pm}$
with the following (in)equalities for the indices
resp. lengths of  these closed
geodesics:
$l(c_1),l(c_2)<2\pi,
\ind(c_1)=2,\ind(c_2)=4,
l(c_2^{-})\le 2\pi N/(N-1); \ind(c_2^{-1}) \in \{4,6\},
$
and $l(c_1^{-1})\ge 2\pi N, \ind(c_1^{-1})\ge
2N(m-1).$

\bigskip

In a certain sense one can say that
these examples show that
the minimal number of 
\emph{short} closed geodesics on a 
sphere of dimension $n=2m-1$ resp.
$n=2m$ is $2m-1.$ Here \emph{short} closed geodesics
posess an \emph{a priori} bound for the index.
%%%%%%%%%%%%%%%%%%%%%%%%%%%%%%%%%%%%%%%%%%%%%%%%%%%%%%%%%%%%%%%%%%%%%%%%%%%
%%%%%%%%%%%%%%%%%%%%%%%%%%%%%%%%%%%%%%%%%%%%%%%%%%%%%%%%%%%%%%%%%%%%%%%%%%%
%%%%%%%%%%%%%%%%%%%%%%%%%%%%%%%%%%%%%%%%%%%%%%%%%%%%%%%%%%%%%%%%%%%%%%%%%%%
\section{Genericity statement}
%%%%%%%%%%%%%%%%%%%%%%%%%%%%%%%%%%%%%%%%%%%%%%%%%%%%%%%%%%%%%%%%%%%%%%%%%%%%%
\label{sec:generic}
%%%%%%%%%%%%%%%%%%%%%%%%%%%%%%%%%%%%%%%%%%%%%%%%%%%%%%%%%%%%%%%%%%%%%%%%%%%%%%
	%\label{rem:open-and-dense}	
	The set $\mathcal{F}(T)$
	of Finsler metrics on a compact manifold
	$M$ for which all closed geodesics of 
	length $\le T$ are non-degenerate is an open and
	dense subset
	of the space $\mathcal{F}=
	\mathcal{F}(M)$ of Finsler metrics
	on $M$
	with the strong $C^r$-topology for
	$r\ge 4,$ 
	cf.\cite[Thm.4]{RT2020}.
	%%%%%%%%%%%%%%%%%%%%%%%%%%%%%%%%%%%%%%%%%%%%%%%%%%%%%%%%%%%%%%%%%%%%%%%
	%%%%%%%%%%%%%%%%%%%%%%%%%%%%%%%%%%%%%%%%%%%%%%%%%%%%%%%%%%%%%%%%%%%%%%%
	%%%%%%%%%%%%%%%%%%%%%%%%%%%%%%%%%%%%%%%%%%%%%%%%%%%%%%%%%%%%%%%%%%%%%%%
	\begin{proof}[Proof of Theorem~\ref{thm:open-and-dense}]
	Let $f_1 \in \mathcal{F}_1(L),$ hence 
	by definition
	all closed geodesics of the Finsler metric
	$f_1$ of length $\le D^3(f_1) L$ are non-degenerate.
	Let $\phi_{f_1}^t: TM \longrightarrow TM$ be the \emph{geodesic flow}
	of the Finsler metric $f_1.$ 
	If $\tau: TM\longrightarrow M$ is the tangent bundle
	projection then $t \longmapsto \tau(\phi_{f_1}^t(v))$
	is the geodesic $c_v$ determined by the initial condition
	$c_v'(0)=v.$
	Let $HM=(TM-M)/\R^+$ be the bundle of oriented directions in the tangent bundle $TM.$ We consider instead of the geodesic flow
	$\phi_{f_1}^t: TM \longrightarrow TM$ the map
	$\Phi_{f_1}^t: HM \longrightarrow HM$
	with $\Phi_{f_1}^t(v)=\phi_{f_1}^t(v/f_1(v)),$ hence the
	geodesic $t \in \R \longmapsto \tau(\Phi_{f_1}^t(v))\in M$
	is parametrized by arc length. If 
	$\Phi_{f_1}^t(v)$ is a periodic flow line of period
	$a,$ then $t \in [0,a]\longmapsto \tau(\Phi_{f_1}^t(v))$
	is a closed geodesic of length $a.$
	The \emph{minimal period} is then the length of the
	underlying prime closed geodesic.	 

	Let $\Phi_{f_1}^t(v_1),\ldots, \Phi_{f_1}^t(v_N)$ 
	be the periodic flow lines of the
	geodesic flow 
	$\Phi_{f_1}^t: HM \longrightarrow HM$
	corresponding to the closed geodesics of period (resp. length)
	$a_1,\ldots,a_N$ which satisfy $a_i\le D(f_1)^3 L.$ 
		
	Then there is an
	open neighborhood $\mathcal{U}\subset \mathcal{F}$ of $f_1$ such
	that the following holds: There are continuous maps
	$v_i: f \in \mathcal{U} \mapsto v_i(f) \in H\sn\,, \,a_i:f \in \mathcal{U}
	\mapsto a_i(f) \in (0,\infty), i=1,2,\ldots,N$ 
	with $v_i=v_i(f_1), a_i=a_i(f_1), i=1,\ldots,N$
	such that for all $f \in \mathcal U$ the  sets
	$\Phi_{f}^t(v_i(f)), t\in [0,a_i(f)], i=1,2,\ldots,N$
	are periodic 
	and non-degenerate
	flow lines of the geodesic flow of $f$ 
	of period $a_1(f),\ldots,a_N(f)$ and there are no further periodic flow
	lines of $f$ of length $\le D^3(f) L.$ This holds since the distortion
	% % % % % % % % % % % % % % % % % % % % % %
	\beq
	\label{eq:Df}
	f \in \mathcal{F}\longmapsto D(f) \in (0,\infty)
	\eeq
	% % % % % % % % % % % % % % % % % % % % % % % % %
	is a continuous function.
	Hence the set $\mathcal{F}_1(L)$ is an open subset of $\mathcal{F}.$
	
	\smallskip
	
	%Choose $f_1\in \mathcal{F}_1(L)$ and 
	Choose $T=2 D^3(f) L.$ 
	Since $\mathcal{F}(T)$ is a dense subset of $\mathcal{F}$ we find a
	sequence $(f_k)_{k\ge 2} \subset \mathcal{F}(T)$ converging to $f_1.$
	Since the function given in Equation~\eqref{eq:Df} is continuous
	it follows that also $\mathcal{F}_1(L)$ is dense in $\mathcal{F}.$	
	\end{proof}
 %%%%%%%%%%%%%%%%%%%%%%%%%%%%%%%%%%%%%%%%%%%%%%%%%%%%%%%%%%%%%%%%%%%%%%%%%%%%%%%%%%%%%%%%%%%%%%%%%%%%%%%%%%%%%%%%%%%%%%%%%%%%%%%%%%%%%%%%%%
 \section*{Statements and Declarations}
 %{\em Conflicts of interest/competing interests}\\
 The author has no relevant financial or non-financial interests
 to disclose.
 
 \smallskip
 
 %{\em Datasharing}\\
 Data sharing is not applicable to this article as no datasets were generated or analysed during the current study.

 %%%%%%%%%%%%%%%%%%%%%%%%%%%%%%%%%%%%%%%%%%%%%%%%%%%%%%%%%%%%%%%%%%%%%%%%%

\end{document}